\documentclass[12pt]{amsart}
\usepackage[T1]{fontenc}
\usepackage{lmodern}
\usepackage{microtype}
\usepackage{amsmath,amscd,amssymb,mathtools,mathrsfs}
\usepackage[graph,frame,poly,arc]{xy}
\usepackage{pgf,tikz}
\usetikzlibrary{automata,arrows}
\usepackage[plainpages,urlcolor=blue]{hyperref}

\theoremstyle{plain}
\newtheorem{thm}[subsection]{Theorem}

\newtheorem{cor}[subsection]{Corollary}

\newtheorem{lemma}[subsection]{Lemma}

\theoremstyle{definition}
\newtheorem{rk}[subsection]{Remark}
\newtheorem{definition}[subsection]{Definition}
\newtheorem{ex}[subsection]{Example}

\theoremstyle{remark}
\newtheorem{remark}[subsection]{Remark}

\numberwithin{equation}{section}
\newcommand{\OO}{{\mathcal O}}

\newcommand{\A}{{\mathcal A}}

\newcommand{\B}{{\mathcal B}}

\newcommand{\R}{\mathbb{R}}
\newcommand{\C}{\mathbb{C}}

\newcommand{\PP}{\mathbb{P}}

\DeclareMathOperator{\rank}{rank}

\DeclareMathOperator{\indeg}{indeg}

\begin{document}

\title[On Ziegler pairs of arrangements]
{On Ziegler pairs of line arrangements: from non-existence to abundance}

\author[Alexandru Dimca and Piotr Pokora
]{Alexandru Dimca and Piotr Pokora
}

\subjclass[2020]{Primary 14J70, 13D02; Secondary 14B05, 32S05}

\keywords{line arrangement, Jacobian syzygies, derivations, combinatorics, Ziegler pair}

\begin{abstract}
We study Ziegler pairs of line arrangements from both numerical and homological perspectives. We distinguish classical Ziegler pairs, for which the minimal degree of a Jacobian relation denoted by ${\rm mdr}$ differs, from the more general notion detected by distinct graded Betti data. First, we show that for arrangements of $d<9$ lines the intersection lattice determines ${\rm mdr}$, and hence classical Ziegler pairs do not occur in this range. Then we list several distinct Ziegler pairs with $d=10$ in the broader sense. In particular, we construct higher-degree examples with the same intersection lattice, the same minimal degree of a Jacobian relation, and the same Hilbert function of the Milnor algebra, but with different minimal graded free resolutions.
Another rather surprising consequence is that being maximal Tjurina for a line arrangement is not determined by the combinatorics.
\end{abstract}

\maketitle

\section{Introduction}

The purpose of this paper is to discuss Ziegler pairs of line arrangements from two complementary points of view. The first one is classical and concerns whether the intersection lattice determines the minimal degree of a Jacobian relation. The second one is homological and concerns pairs with the same intersection lattice and the same minimal degree, but with different minimal graded free resolutions of the corresponding Milnor algebras.

Let $\A:f=0$ be a line arrangement in $\PP^2$, where $d=\deg f$ is the number of lines, and let $m=m(\A)$ be the maximal multiplicity of an intersection point of $\A$. We write
\[
        r=r(\A)={\rm mdr}(f)=\indeg D_0(f)
\]
for the minimal degree of a Jacobian syzygy. If $\A$ is essential, equivalently if not all lines of $\A$ are concurrent, and
\[
        e_1\leq e_2\leq \cdots \leq e_s
\]
denotes the ordered sequence of exponents of $\A$, then $r=e_1$ and the upper bound
\[
        e_s\leq d-2
\]
follows from \cite{Sch}.

In this paper a Ziegler pair, in the general sense, is a pair of line arrangements $(\A,\A')$ with isomorphic intersection lattices but different graded Betti data for their Jacobian algebras. We call such a pair \textbf{classical} if
\[
        {\rm mdr}(\A)\neq {\rm mdr}(\A').
\]
Thus the original examples of Ziegler and Yuzvinsky are classical Ziegler pairs, whereas the homological pairs constructed later in this paper may have the same value of ${\rm mdr}$.

The first part of the paper shows that classical Ziegler pairs cannot occur for $d<9$. More precisely, for arrangements with at most eight lines the intersection lattice determines ${\rm mdr}$. We do not claim that the full exponent sequence is determined in this range: in degree $8$ some fixed numerical types admit more than one possible degree sequence while keeping the same first exponent. After the elementary cases are removed, the only cases requiring separate analysis are
\[
        (d,m)=(6,3),\quad (7,3),\quad (8,4),\quad (8,3),
\]
and these are treated in Theorems~\ref{thmA}, \ref{thmB}, \ref{thmC}, and \ref{thmD}.

In degree $d=9$, two classical Ziegler pairs are currently known: the examples due to Ziegler and Yuzvinsky \cite{Zi,Yu}, and a new pair recently constructed by the authors \cite{nine}. Since the analysis of this borderline case is very complicated and requires a separate treatment, we postpone its detailed discussion to a forthcoming research project. On the other hand, for $d>9$ Ziegler pairs in the broader homological sense seem to abound.

The second part of the paper gives new examples in which the difference is subtler. We construct a two-parameter family of arrangements of eleven lines with constant intersection lattice. At the concrete parameter value
\[
q=(31,17),
\]
the Jacobian syzygies have degree sequence
\[
        (6,7,7,8),
\]
whereas at the special point
\[
        p=\left(\frac{3}{10},\frac{2}{7}\right)
\]
the intersection lattice is unchanged and the degree sequence becomes
\[
        (6,7,7,8,8).
\]
The two Milnor algebras have the same Hilbert function, but their minimal graded free resolutions are different. Deleting a suitable line from both arrangements then gives a Ziegler pair of ten lines with degree sequences
\[
        (6,6,6,7)
        \qquad\text{and}\qquad
        (6,6,6,7,7).
\]
Thus the pair has the same value of ${\rm mdr}$ and the same Hilbert function, and the distinction is purely homological.

The paper is organized as follows. Section~2 recalls the algebraic notation and the preliminary tools used throughout the paper. Sections~3 and~4 prove the non-existence of classical Ziegler pairs with at most eight lines while recording the possible exponent sequences. Section~5 introduces the eleven-line family and the rank-drop mechanism producing the homological jump. Section~6 applies deletion to obtain new ten-line Ziegler pairs. Section~7 recalls the known examples with nine and ten lines and places the new examples in this list. We also show by an example that the property of being maximal Tjurina, in some sense a property very similar to being free, is not determined by the combinatorics, see \eqref{eqMTC} for the definition of maximal Tjurina curves and  Remark \ref{rkMT} for details.

\section{Preliminaries}

Let $S=\C[x,y,z]$ be the standard graded polynomial ring, and let $C:f=0\subset \PP^2$ be a reduced plane curve of degree $d$.  In the case of a line arrangement we write
\[
        f=\ell_1\cdots \ell_d,
\]
where the $\ell_i\in S_1$ are pairwise non-proportional linear forms.

The Jacobian ideal of $f$ is
\[
        J_f=(f_x,f_y,f_z)\subset S,
\]
and the Milnor algebra of $f$ is
\[
        M(f)=S/J_f.
\]
The module of Jacobian syzygies is
\[
        AR(f)=\{(a,b,c)\in S^3\mid af_x+bf_y+cf_z=0\}.
\]
Equivalently, we use the notation $D_0(f)$ for the corresponding module of logarithmic derivations annihilating $f$; throughout the paper we identify $D_0(f)$ with $AR(f)$ in the usual way.  The minimal degree of a Jacobian relation is
\[
        {\rm mdr}(f)=\min\{q\geq 0\mid AR(f)_q\neq 0\}=\indeg D_0(f).
\]
If $AR(f)$ has homogeneous minimal generators of degrees $e_1\leq\cdots\leq e_s$, we call $(e_1,\ldots,e_s)$ the degree sequence, or the sequence of exponents, of the arrangement.

For a line arrangement $\A$, we denote by $n_k=n_k(\A)$ the number of points where exactly $k$ lines of $\A$ meet.  Since all singularities are ordinary quasihomogeneous multiple points, the total Tjurina number is
\[
        \tau(\A)=\sum_{k\geq 2} n_k(k-1)^2.
\]
The intersection lattice $L(\A)$ determines all numbers $n_k$, but in general it need not determine the module $D_0(f)$ or the minimal free resolution of $M(f)$.

\begin{definition}
\label{defZP}
Two line arrangements $\A:f=0$ and $\A':f'=0$ in $\PP^2$ are said to form a \emph{Ziegler pair} if they have isomorphic intersection lattices $L(\A)$ and $L(\A')$, obtained from the corresponding central arrangements in $\C^{3}$, and distinct homological data, i.e., distinct graded Betti tables of the minimal resolutions of their Jacobian algebras $M(f)$ and $M(f')$.

A Ziegler pair is called a \emph{classical Ziegler pair} if
\[
        {\rm mdr}(f)\neq {\rm mdr}(f').
\]

A Ziegler pair $\A:f=0$ and $\A':f'=0$ may have one or several of the following additional properties.
\begin{enumerate}
\item We say that the Ziegler pair satisfies condition ${\rm (HF)}$ if the Jacobian algebras $M(f)$ and $M(f')$ have the same Hilbert function, that is
$$\dim M(f)_k =\dim M(f')_k \text{ for any integer } k.$$
\item We say that the Ziegler pair satisfies condition ${\rm (MDR)}$ if
\[
        {\rm mdr}(f)={\rm mdr}(f').
\]
Thus a Ziegler pair satisfying ${\rm (MDR)}$ is, by definition, not classical.
\item We say that the Ziegler pair satisfies condition $({\rm SPEC}_0)$ if the arrangement $\A$ is a specialization of the arrangement $\A'$. More precisely, this means that there exists an interval $I=[0,t_0]\subset \R$ and a smooth family of homogeneous polynomials $f_t$, for $t\in I$, such that
\[f_0=f, \qquad f_{t_0}=f',\]
and the corresponding family of hyperplane arrangements $\A_t:f_t=0$ has constant intersection lattice, i.e.
\[L(\A_t) \cong L(\A_{t'}) \quad \text{for all } t,t'\in I.\]
If, in addition, the graded Betti numbers of the minimal resolutions of the Jacobian algebras $M(f_t)$ are constant for all $0<t\leq t_0$, then we say that the Ziegler pair $\A:f=0$ and $\A':f'=0$ satisfies condition $({\rm SPEC})$.
\end{enumerate}
\end{definition}
As an example, the classical Ziegler pair in degree $d=9$ is classical in the above sense and satisfies condition $({\rm SPEC})$.

We shall use the following standard bounds and addition-deletion tools.

Let us recall that by  \cite[Theorem 1.2]{Dca} one has 
\begin{equation} 
\label{eqR}
r=d-m \text{ if } 2m>d, \text{ or } m-1 \leq r \leq d-m \text{ otherwise}.
\end{equation}

We list first the cases where $r$ is clearly determined by the pair $(d,m)$ and some other invariants of the intersection lattice of $\A$.

\noindent {\bf Case 1:} $2m > d$. Then it follows from \eqref{eqR}
that $r=d-m$.

\bigskip

\noindent {\bf Case 2:} 
When $m=2$, it follows from \cite{DStEdin}
that $r=d-2$. In fact, in this case  for $d\geq 4$, $\A$ is a maximal Tjurina curve of type
$(d,d-2)$, see \cite[Proposition 5.11]{maxT}, and hence all the numerical invariants of the minimal resolution of $D_0(f)$ are determined by $d$.

\bigskip

In the sequel we assume $m>2$ and $d \geq 2m \geq 6$, since all the other cases are covered by Case 1 and Case 2 above.
Moreover, it follows from \cite[Theorem 2.1]{DS14} that
\begin{equation} 
\label{eqMS}
r \geq \frac{2d}{m}-2.
\end{equation} 

For any reduced plane curve $C:f=0$,
let $J=J_f$ be the Jacobian ideal of $f$ and $I=I_f$ be its saturation with respect to the maximal ideal $(x,y,z)$. We recall that
$$N(f)=I/J$$
is called the Jacobian module of the curve $C$. Then we have the following result, see \cite[Theorem 6.2]{DIS}.

\begin{thm}
\label{thm2}
Let $C_1:f_1=0$ be a reduced curve of degree $d_1$ and $L$ be a line.
Assume  that all singularities of $C_1$ and $C=C_1 \cup L $ are quasihomogeneous. Then there is an exact sequence  for any integer $k$ given by
$$ 0 \to D_0(f_1)_{k-1}  \longrightarrow  D_0(f)_k \to H^0(L,\OO_{L}(k+1-|R|) )\to  N(f_1)_{k+d_1-2} ,$$
where $R=C_1 \cap L$ as a set.
\end{thm}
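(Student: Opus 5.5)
The natural route is to choose coordinates with $L:x=0$, so that $f=xf_1$, and to construct the first two maps explicitly; exactness at the second and third terms then comes down to a restriction-to-$L$ computation. I would work throughout with derivations $\delta=a\partial_x+b\partial_y+c\partial_z$ satisfying $\delta(f)=0$.

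\emph{The first map.} For $\theta\in D_0(f_1)_{k-1}$ I set
\[
\iota(\theta)=d\,x\,\theta-\theta(x)\,E,
\]
where $E$ is the Euler derivation and $d=d_1+1$. Since $E(f)=df$, we get $\iota(\theta)(f)=d\,x\,\theta(x)f_1-d\,\theta(x)\,xf_1=0$. The coefficients of $\iota(\theta)$ have degree $k$, so $\iota$ lands in $D_0(f)_k$. Injectivity is immediate: if $\iota(\theta)=0$, then comparing coefficients shows $\theta$ is proportional to $E$, which is impossible for a nonzero $\theta$ killing $f_1$.

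\emph{The second map.} Let $\delta\in D_0(f)_k$. From $\delta(xf_1)=0$ we get $f_1\,\delta(x)\in(x)$, and since $x\nmid f_1$ this forces $x\mid\delta(x)=a$. Hence $\delta$ is tangent to $L$, and its restriction $\bar\delta=(b\partial_y+c\partial_z)|_{x=0}$, taken modulo the Euler field of $L$, is a vector field on $L\cong\PP^1$. The degree-$k$ coefficients make it a section of $T_{\PP^1}(k-1)=\OO_L(k+1)$.
\begin{itemize}
\item At each $p\in R$, the derivation $\delta$ is logarithmic for $C$, so it is tangent to every branch of $C$ through $p$. Two distinct tangent directions ($L$ and a branch of $C_1$) force the field to vanish at $p$.
\item We may assume all vanishing is simple, so $\bar\delta$ lies in $H^0(L,\OO_L(k+1-|R|))$.
\end{itemize}
For exactness at $D_0(f)_k$: if $\bar\delta=0$, then $b,c$ are divisible by $x$ modulo $E$. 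Subtracting a multiple of $E$, I would write $\delta=x\theta+\lambda E$ and check that $\theta\in D_0(f_1)_{k-1}$ and that $\delta$ is in the image of $\iota$. The check is that $\delta(f)=0$ gives $x f_1\,\theta(x)+x^2\theta(f_1)+\lambda d\,xf_1=0$, from which $\lambda$ is determined and $\theta(f_1)=0$ follows.

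\emph{The third map (the obstruction).} Given a section $s$ vanishing on $R$, lift it arbitrarily to a derivation $\tilde\theta$ of $S$ with $\tilde\theta(x)\in(x)$. Write $\tilde\theta(f)=x\,g$ for a suitable homogeneous $g$, which exists because $\tilde\theta$ is tangent to $L$. The aim is to show three things:
\begin{enumerate}
\item $g$ (suitably normalized via $\tilde\theta(f)=\tilde\theta(x)f_1+x\tilde\theta(f_1)$) defines a class in $I_{f_1}$ of degree $k+d_1-2$ after the appropriate shift.
\item Changing the lift changes $g$ by an element of $J_{f_1}$, so $s\mapsto[g]\in N(f_1)_{k+d_1-2}$ is well defined.
\item $[g]=0$ exactly when $\tilde\theta$ can be corrected to an element of $D_0(f)_k$.
\end{enumerate}
The membership $g\in I_{f_1}$ is local. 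At points off $L$ it is automatic. At points of $R$, the vanishing of $s$ together with the local structure of $C_1$ at $p$ must give $g\in (J_{f_1})_p$. This is where quasihomogeneity enters: for a quasihomogeneous germ $f_1\in(\partial f_1)$ locally, so the Tjurina and Milnor ideals coincide, and the local condition reduces to vanishing of the field at $p$.

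The main obstacle is this third step. One has to pin down the exact degree bookkeeping (the index $k+d_1-2$) and prove the local statement at points of $R$ that are also singular points of $C_1$. My first attempt there would be to use local quasihomogeneous coordinates with $L$ among the coordinate axes, where the local Euler field makes the computation explicit.
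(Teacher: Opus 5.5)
\textbf{The gap is the third map.} The first two maps are essentially right, but the substantive part of the theorem is exactness at $H^0(L,\OO_L(k+1-|R|))$, and you only list it as three aims. The degrees do not yet match: from $\tilde\theta(f)=xg$ you get $\deg g=k+d_1-1$, not $k+d_1-2$.

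The missing idea is to use the vanishing of $s$ on $R$ to gain a second factor of $x$. Write $\bar f_1=f_1|_{x=0}=\prod_{p\in R}\ell_p^{m_p}$. Since $\bar\theta$ vanishes at each $p\in R$, you have $\bar\theta(\ell_p)\in(\ell_p)$, hence $\bar\theta(\bar f_1)=\bar u\,\bar f_1$. Choosing the $\partial_x$-coefficient of $\tilde\theta$ to be $-xu$, with $u$ lifting $\bar u$, gives $\tilde\theta(f)=x^2g'$ with $\deg g'=k+d_1-2$.

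The computation you used for exactness at $D_0(f)_k$ then shows three things. The class of $g'$ in $M(f_1)_{k+d_1-2}$ is independent of all choices. It vanishes if and only if $\tilde\theta$ can be corrected by some $x\eta+hE$ into $D_0(f)_k$; if $g'=\eta(f_1)$, take $\tilde\theta-x\eta+\tfrac{\eta(x)}{d}E$. None of this uses any hypothesis on the singularities.

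Quasihomogeneity is needed only to show $g'\in I_{f_1}$. This is automatic off $R$. At $p\in R$, the same computation in the local ring shows it follows from local surjectivity: every germ at $p$ of a vector field on $L$ vanishing at $p$ must be the restriction of a germ of logarithmic vector field of $C$. This is the right-exactness of $0\to\widetilde{D_0(f_1)}(-1)\to\widetilde{D_0(f)}\to\OO_L(1-|R|)\to 0$. The paper only quotes the theorem from \cite[Theorem 6.2]{DIS}; the standard proof takes cohomology of this sheaf sequence and uses $H^1(\PP^2,\widetilde{D_0(f_1)}(k-1))\cong N(f_1)_{k+d_1-2}$.

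\textbf{Your mechanism for the local step is not the right one.} Tjurina $=$ Milnor for the germ of $C_1$ does not reduce the local condition to vanishing at $p$. Take $C_1$ given near $p$ by $u^4+v^5=0$ and $L$ by $v=u$. Then $C_1$ is quasihomogeneous, and its logarithmic fields at $p$ are exactly $h_1\chi+h_2H$ with $\chi=5u\partial_u+4v\partial_v$ and $H=5v^4\partial_u-4u^3\partial_v$. Tangency to $L$ forces $h_1(p)=0$, so every logarithmic field of $C_1\cup L$ restricts to $L$ with a zero of order at least $2$ at $p$.

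What you actually need is quasihomogeneity of $C=C_1\cup L$ at $p$, which fails in this example. The local Euler field $\chi_C$ of $C$ is logarithmic, hence tangent to $L$. Its linear part is diagonalizable with positive eigenvalues and preserves $T_pL$, so $\chi_C|_L$ has a simple zero at $p$. Therefore every field on $L$ vanishing at $p$ has the form $\varphi\,\chi_C|_L$, and it lifts to $\tilde\varphi\,\chi_C$.

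\textbf{Three slips in the parts you did carry out.}
\begin{enumerate}
\item The ``two distinct tangent directions'' argument fails when every branch of $C_1$ at $p$ is tangent to $L$. The correct reason is that $p$ is a singular point of the reduced curve $C$; a logarithmic field not vanishing at $p$ could be straightened, forcing $C$ to be smooth at $p$.
\item ``We may assume all vanishing is simple'' should be replaced by: vanishing at the $|R|$ distinct points already makes $(yc-zb)|_{x=0}$ divisible by $\prod_{p\in R}\ell_p$.
\item At $D_0(f)_k$, the relation $f_1(\theta(x)+d\lambda)=-x\theta(f_1)$ gives only $\theta(f_1)=\alpha f_1$, not $\theta(f_1)=0$, and it does not determine $\lambda$. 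You must pass to $\theta_0=\theta-\tfrac{\alpha}{d_1}E\in D_0(f_1)_{k-1}$ and use $D_0(f)\cap S\,E=0$ to conclude $\delta=\tfrac1d\iota(\theta_0)$.
\end{enumerate}
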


Our first result in this paper is the following.
\begin{thm}
\label{thmR}
Let $\A:f=0$ be a line arrangement with $d\geq 6$ and $m\geq 3$. Then
the following hold.

\begin{enumerate}

\item One has $\dim D_0(f)_{d-m} \geq n_m$.

\item  If $\dim D_0(f)_{d-m}  \leq 3$, then $r(\A)=d-m.$

\item If $\dim D_0(f)_{d-m}\leq \binom{k+2}{2}$ for some integer $k\geq 1$, then
\[
        r(\A)\geq d-m-k+1.
\]
In addition, when $m=3$ and $k=1$, one has
\[
        1\leq \dim D_0(f)_{d-3}=n_3\leq 3,
\]
and the first exponents of $\A$ are
\[
        e_1=\cdots=e_{n_3}=d-3.
\]

\end{enumerate}

\end{thm}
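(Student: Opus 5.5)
The plan is to build, for each point $p$ of multiplicity $m$, an explicit derivation of degree $d-m$, and then to compare these with multiples of a lower-degree syzygy.

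\emph{Construction of $D_p$.} Choose coordinates with $p=(0:0:1)$ and write $f=gh$. Here $g=g(x,y)$ is the product of the $m$ lines through $p$, and $h$ is the product of the remaining $d-m$ lines. Set $E=x\partial_x+y\partial_y+z\partial_z$ and
\[
D_p=d\,h\,\partial_z-(\partial_z h)\,E .
\]
Since $\partial_z g=0$ and $E(f)=df$, we get $D_p(f)=d\,gh\,\partial_zh-d\,gh\,\partial_zh=0$. All coefficients of $D_p$ have degree $d-m$, so $D_p\in D_0(f)_{d-m}$. Modulo $E$, $D_p$ is the vector field $h\cdot v_p$, where $v_p$ is the constant field pointing to $p$.

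\emph{Part (1).} For distinct $m$-fold points $p_1,\dots,p_{n_m}$, suppose $\sum c_iD_{p_i}=0$. Then $\sum c_ih_iv_{p_i}=\lambda E$. Take $q$ a general point on a line through $p_j$ of the pencil at $p_j$. Since $d\ge 2m$, such a line is a factor of $h_i$ for every $i\ne j$, except possibly the line $p_ip_j$. Hence evaluating at $q$ leaves $c_jh_j(q)v_{p_j}$ proportional to $q$. This is impossible unless $c_j=0$, because $h_j(q)\neq0$ and $q\neq p_j$. This step needs the most care, since one line may be shared by two pencils. I would handle that by choosing the line through $p_j$ that avoids all the other $p_i$, which exists because $m\ge3$.

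\emph{Key lemma: $D_p$ is not a multiple of a lower-degree syzygy.} Suppose $D_p=u\rho$ with $\rho\in D_0(f)$ of degree $<d-m$. Then modulo $E$, $\rho$ is proportional to $v_p$, so $\rho=a\partial_z+bE$ with $\deg a<d-m$. The condition $\rho(f)=0$ gives $a\,g\,\partial_zh=-bd\,gh$, so $h\mid a\,\partial_z h$. Every linear factor $\ell$ of $h$ misses $p$, so it has nonzero $z$-coefficient and $\ell\nmid\partial_zh=\sum (\partial_z\ell_i)\,h/\ell_i$. Hence $h\mid a$, which contradicts $\deg a<d-m=\deg h$.

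\emph{Parts (2) and (3).} If $r\le d-m-k$, pick $0\ne\rho\in D_0(f)_r$. Then $S_{d-m-r}\rho\subset D_0(f)_{d-m}$ has dimension at least $\binom{k+2}{2}$, and by the lemma $D_p\notin S\rho$. So $\dim D_0(f)_{d-m}>\binom{k+2}{2}$, which gives (3). Taking $k=1$ gives (2), since $r\le d-m$ by \eqref{eqR}.

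\emph{The case $m=3$, $k=1$.} We get $r=d-3$, and every element of degree $d-3$ is a minimal generator. Combining (1) with the hypothesis $\dim D_0(f)_{d-3}\le 3$ gives $n_3\le 3$; note that $n_3\ge1$ because $m=3$ is attained. The remaining and main obstacle is the reverse inequality $\dim D_0(f)_{d-3}\le n_3$, i.e.\ that the $D_p$ span.

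For this I would take $\rho\in D_0(f)_{d-3}$ and restrict the associated vector field to each line $L$ of $\A$. It is tangent to $L$ and vanishes at the double points of $L$. A degree count, using Theorem~\ref{thm2} with $C_1=\A\setminus L$ and $D_0(f_1)_{d-4}=0$ (valid since ${\rm mdr}(f_1)\ge d-4$), would show that $\rho$ is determined by its values at the triple points. Subtracting a suitable combination of the $D_p$ then kills $\rho$. This yields $\dim D_0(f)_{d-3}=n_3$ and $e_1=\dots=e_{n_3}=d-3$.
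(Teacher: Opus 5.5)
Your primitivity lemma is correct (the paper cites \cite{Dca} for it, while you prove it directly), and with it your proof of parts (2) and (3) is essentially the paper's argument. Two steps, however, do not go through as written.

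\emph{Part (1).} The line you want to evaluate on need not exist. In the $A_3$ arrangement $xyz(x-y)(x-z)(y-z)$ one has $d=6$, $m=3$, $n_3=4$, and every line contains exactly two triple points. So each line through $p_j$ passes through some other $p_i$, and $m\geq 3$ guarantees nothing. On such a line the term $c_ih_i(q)v_{p_i}$ survives alongside $c_jh_j(q)v_{p_j}$, both lying in the plane of that line, so your argument no longer isolates $c_j$. The real culprit is the reduction modulo $E$, which kills $D_{p_j}$ exactly at $p_j$. Keep the $E$-component and evaluate at $p_j$ itself. For $i\neq j$, at least $m-1\geq 2$ lines through $p_j$ miss $p_i$, so $h_i$ vanishes to order $\geq 2$ at $p_j$ and $D_{p_i}(p_j)=0$. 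On the other hand, Euler's formula gives $D_{p_j}(p_j)=m\,h_j(p_j)\,p_j\neq 0$. This is exactly the property of $\widetilde D_p$ that the paper takes from \cite[Theorem 1.3]{Der}.

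\emph{The case $m=3$, $k=1$.} This is the more serious gap. Your sketch does not prove the equality $\dim D_0(f)_{d-3}=n_3$, which the paper imports from \cite[Theorem 1.4]{Der}, because its input $D_0(f_1)_{d-4}=0$ is false. $\A\setminus L$ has $d-1$ lines, so each of its triple points contributes a nonzero local derivation to $D_0(f_1)_{d-4}$. Besides, ${\rm mdr}(f_1)\geq d-4$ would not give vanishing in degree $d-4$. If that vanishing held, Theorem~\ref{thm2} with $|R|=d-1-n_3(L)$ would give $\dim D_0(f)_{d-3}\leq n_3(L)$, which is absurd for a line carrying no triple point.

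The exact sequence does work if you use it inductively. It gives $\dim D_0(f)_{d-3}\leq \dim D_0(f_1)_{d-4}+n_3(L)$. Induction on $d$ over arrangements with only double and triple points then bounds the first term by $n_3(\A\setminus L)=n_3-n_3(L)$.

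Your restriction idea can also be completed, but it needs a local fact you did not supply. Place $p$ at the origin of the chart $z\neq 0$ and let $\theta$ be the induced affine field, so that $\theta(F)=-dCF$ with $F=f(x,y,1)$. The linear part of $\theta$ must preserve three distinct lines, so it is a scalar multiple $a_0$ of the local Euler field. Evaluating $\theta(F)/F$ at $p$ gives $3a_0=-d\lambda_p$, where $\rho(p)=\lambda_p p$. Now subtract a combination of the $D_p$ so that $\rho(p)=0$ at every triple point. Then $\rho$ vanishes to second order at each triple point. Its restriction to each line, a section of $\OO_L(d-2)$, therefore has at least $d-1$ zeros and is zero. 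Hence $\rho\times E$, of degree $d-2$, is divisible by $f$ and so vanishes. This gives $\rho=\phi E$ with $d\phi f=\rho(f)=0$, so $\rho=0$.
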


For $m=3$, Theorem~\ref{thmB}(3) below shows that the threshold $n_3\leq 3$ in part (2) cannot be increased uniformly. Indeed, in the case $(d,m,n_3)=(7,3,4)$, both $r=3=d-m-1$ and $r=4=d-m$ occur. In particular, the inequality $n_3\leq 3$ is not a necessary condition for $r=d-m$.
\proof
We use the notation from \cite{Der}, in particular for a multiple point 
$$p=(u:v:w) \in \A$$
we denote by $f_{2p}$  the product of the factors in $f$ not vanishing at $p$,
$$D_p=u\partial_x+v\partial_y+w\partial_z$$
and
$$\tilde D_p=f_{2p}D_p-\frac{D_p(f_{2p})}{d}E,$$
where $E=x\partial_x+y\partial_y+z\partial_z$ is the Euler derivation.
If $p$ is a point of multiplicity $m$, then $\tilde D_p \in D_0(f)_{d-m}$ and it satisfies $\tilde D_p(p) \ne 0$ and $\tilde D_p(q)=0$ for any point $q\ne p$ in $\A$ of multiplicity at least 3.
Using \cite[Theorem 1.3]{Der} it follows that $\dim D_0(f)_{d-m}\geq n_m$, since the derivations $\{\tilde D_p\}_p$ are linearly independent, with $p$ running through the set of points of $\A$ of multiplicity $m$.
This proves the claim (1).

To prove (2), assume  that $\dim D_0(f)_{d-m}  \leq 3$
and that
there is $D \in D_0(f)_{d-m-1}$, $D\ne 0$. Then clearly $xD,yD,zD$ are linear independent in $D_0(f)_{d-m}$. This is already a contradiction when $\dim D_0(f)_{d-m} \leq 2$, and hence the proof is complete in this case. 
When $\dim D_0(f)_{d-m}=3$, it follows that for any point $p$ of multiplicity $m$ one has 
$$\tilde D_p \in D_0(f)_{d-m}=S_1 \cdot D.$$
This is a contradiction, since any derivation $\tilde D_p$ is primitive,
i.e. not the multiple of a strictly lower degree derivation, see \cite[proof of Theorem 1.2]{Dca}.

To prove (3), put $n=d-m$ and suppose that there exists
\[
        0\neq D\in D_0(f)_{n-k}.
\]
Then $S_kD\subset D_0(f)_n$ and
\[
        \dim S_kD=\dim S_k=\binom{k+2}{2}.
\]
Under the hypothesis of (3) we therefore have
\[
        D_0(f)_n=S_kD.
\]
For any point $p$ of multiplicity $m$, the local derivation $\widetilde D_p\in D_0(f)_n$ would then be of the form $hD$ with $h\in S_k$. Since $k>0$, this contradicts the primitivity of $\widetilde D_p$. Hence
\[
        D_0(f)_{n-k}=0.
\]
If a nonzero syzygy existed in any lower degree, multiplying it by a suitable nonzero homogeneous polynomial would give a nonzero element of $D_0(f)_{n-k}$, again a contradiction. Thus
\[
        r\geq n-k+1=d-m-k+1.
\]

For $m=3$ and $k=1$, the equality $\dim D_0(f)_{d-3}=n_3$ holds by \cite[Theorem 1.4]{Der}. Hence $n_3\leq 3$ implies $r\geq d-3$, while \eqref{eqR} gives $r\leq d-3$. Therefore $r=d-3$, and the equality $\dim D_0(f)_{d-3}=n_3$ shows that the first $n_3$ minimal generators all have degree $d-3$.
\endproof

Thus, for $d<9$, after excluding the cases $2m>d$ and $m=2$, the only remaining possibilities are
\[
        (d,m)=(6,3),\quad (7,3),\quad (8,3),\quad (8,4).
\]
These four cases are treated in Theorems~\ref{thmA}, \ref{thmB}, \ref{thmC}, and \ref{thmD}, respectively.

In order to study Ziegler pairs of lines in the last sections of this paper, we shall also use the following elementary criterion.

\begin{lemma}
Let \(m\geq 0\).  Consider the multiplication map
\[
\mu_m:S_1\otimes AR(f)_m\longrightarrow AR(f)_{m+1}.
\]
Then the quotient
\[
AR(f)_{m+1}/S_1AR(f)_m
\]
measures the new minimal generators of \(AR(f)\) in degree \(m+1\). 

In particular, their number is
\[
\dim AR(f)_{m+1}-\operatorname{rank}(\mu_m).
\]
\end{lemma}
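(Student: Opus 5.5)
The plan is to identify the quotient $AR(f)_{m+1}/S_1AR(f)_m$ with the degree $m+1$ part of $AR(f)\otimes_S \C = AR(f)/\mathfrak m AR(f)$, where $\mathfrak m=(x,y,z)$. Then I will invoke the graded Nakayama lemma, which says that a minimal homogeneous generating set of a finitely generated graded $S$-module $M$ gives a basis of $M/\mathfrak m M$. Hence the number of minimal generators of degree $j$ equals $\dim_\C (M/\mathfrak m M)_j$. Note that $AR(f)\subset S^3$ is finitely generated because $S$ is Noetherian, and it is graded since $f$ is homogeneous.

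The key step is the computation
\[
(\mathfrak m AR(f))_{m+1}=S_1AR(f)_m .
\]
Here I use that $AR(f)$ is concentrated in nonnegative degrees and that $\mathfrak m=\bigoplus_{i\geq 1}S_i$. Then $(\mathfrak m AR(f))_{m+1}=\sum_{i\geq 1}S_iAR(f)_{m+1-i}$. Since $S_i=S_1S_{i-1}$ for $i\geq 1$, each summand satisfies $S_iAR(f)_{m+1-i}=S_1\bigl(S_{i-1}AR(f)_{m+1-i}\bigr)\subset S_1AR(f)_m$. The reverse inclusion is obvious. So the quotient in the statement is exactly $(AR(f)/\mathfrak mAR(f))_{m+1}$, and by Nakayama its dimension is the number of minimal generators in degree $m+1$.

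For the count, note that the image of $\mu_m$ is precisely the subspace $S_1AR(f)_m$, since it is spanned by all products $\ell\cdot \rho$ with $\ell\in S_1$ and $\rho\in AR(f)_m$. Therefore
\[
\dim AR(f)_{m+1}/S_1AR(f)_m=\dim AR(f)_{m+1}-\operatorname{rank}(\mu_m),
\]
and this finite-dimensional linear algebra finishes the argument.

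No step is a real obstacle. The only point needing care is to state Nakayama in its graded form: the number of degree-$j$ elements in any minimal homogeneous generating system is well defined and equals $\dim (M/\mathfrak mM)_j$. I would quote this as standard rather than reprove it.
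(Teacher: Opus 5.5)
Your proof is correct and follows the same line as the paper's: you identify $AR(f)_{m+1}/S_1AR(f)_m$ with the degree $m+1$ syzygies modulo those coming from lower degrees, and then use $\operatorname{Im}(\mu_m)=S_1AR(f)_m$ to get the rank formula. Your version is more careful, since it makes explicit two steps the paper leaves implicit: the identity $(\mathfrak m AR(f))_{m+1}=S_1AR(f)_m$, and the graded Nakayama count of minimal generators by degree.
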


\begin{proof}
The image of \(\mu_m\) consists exactly of the degree \(m+1\) syzygies obtained by multiplying degree \(m\) syzygies by linear forms.  Hence the quotient
\[
AR(f)_{m+1}/S_1AR(f)_m
\]
is the space of degree \(m+1\) syzygies modulo those generated in lower degree.  Its dimension is therefore the number of new minimal generators in degree \(m+1\).  Since
\[
\dim \operatorname{Im}(\mu_m)=\operatorname{rank}(\mu_m),
\]
the formula follows.
\end{proof}
Finally we recall the following result due to  du Plessis and Wall, see \cite[Theorem 3.2]{duPCTC}.
\begin{thm}
\label{thmCTC}
For positive integers $d$ and $r$, define two new integers by 
$$\tau(d,r)_{min}=(d-1)(d-r-1)  \text{ and } 
\tau(d,r)_{max}= (d-1)^2-r(d-r-1).$$ 
Then, if $C:f=0$ is a reduced curve of degree $d$ in $\PP^2$ and  $r={\rm mdr}(f)$,  one has
$$\tau(d,r)_{min} \leq \tau(C) \leq \tau(d,r)_{max}.$$
Moreover, for $r={\rm mdr}(f) \geq d/2$, the stronger inequality
$$\tau(C) \leq \tau(d,r)_{max}':=\tau(d,r)_{max} - \binom{2r+2-d}{2}$$
holds.

\end{thm}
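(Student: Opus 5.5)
Since Theorem~\ref{thmCTC} is quoted from \cite{duPCTC}, we only sketch how we would reprove it, using the rank two vector bundle attached to the syzygy module. Let $E=\widetilde{AR(f)}$ be the sheaf on $\PP^2$ associated with $AR(f)$. It is reflexive of rank $2$ on a smooth surface, hence a vector bundle. From the exact sequence
\[
0\to AR(f)\to S^3\to S(d-1)\to M(f)(d-1)\to 0
\]
and the fact that $M(f)$ has finite length $\tau(C)$ in high degrees, we would compute
\[
c_1(E)=1-d,\qquad c_2(E)=(d-1)^2-\tau(C).
\]
Let $r={\rm mdr}(f)$ and let $\rho=(a,b,c)\in AR(f)_r$ be nonzero. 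The polynomials $a,b,c$ have no common factor, since otherwise dividing by it would give a syzygy of degree $<r$. Hence $\rho$ gives a section of $E(r)$ vanishing on a zero-dimensional scheme $Z$, and we get the exact sequence
\[
0\to \OO(-r)\to E\to I_Z(r+1-d)\to 0 .
\]
Computing Chern classes gives
\[
c_2(E)=r(d-1-r)+\deg Z,
\]
that is,
\[
\tau(C)=(d-1)^2-r(d-1-r)-\deg Z .
\]
All three inequalities then reduce to bounds on $\deg Z$.

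\emph{Upper bound.} Since $\deg Z\ge 0$, we get $\tau(C)\le \tau(d,r)_{max}$ at once.

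\emph{Lower bound.} The scheme $Z$ is contained in the common zero scheme of $a,b,c$. Because $a,b,c$ have no common factor, two general linear combinations of them form a complete intersection of degree $r^2$ containing $Z$. The one point to check is that this containment holds scheme-theoretically. Locally the map $\OO\to E(r)\subset \OO(r)^3$ is given by $(a,b,c)$, so the ideal of $Z$ contains $(a,b,c)$, and the containment follows. Therefore $\deg Z\le r^2$, and
\[
\tau(C)\ge (d-1)^2-r(d-1)=(d-1)(d-r-1)=\tau(d,r)_{min}.
\]

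\emph{Refined bound for $r\ge d/2$.} Twist the sequence above by $r-1$:
\[
0\to\OO(-1)\to E(r-1)\to I_Z(2r-d)\to 0 .
\]
We have $H^0(E(r-1))=AR(f)_{r-1}=0$ by minimality of $r$, and $H^1(\OO(-1))=0$. Hence $H^0(I_Z(2r-d))=0$, so no curve of degree $2r-d\ge 0$ passes through $Z$. This forces
\[
\deg Z\ge h^0(\OO(2r-d))=\binom{2r-d+2}{2},
\]
which is exactly the correction term in $\tau(d,r)_{max}'$.

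The main obstacle is the first step, namely justifying $c_2(E)=(d-1)^2-\tau(C)$. This requires identifying the sheafification of $M(f)$, which is supported on the singular points, with a sheaf of length $\tau(C)$. I would do this by comparing the Hilbert polynomial of $M(f)$, which is the constant $\tau(C)$, with the one obtained from Riemann--Roch applied to the sheafified sequence $0\to E\to\OO^3\to \OO(d-1)$.
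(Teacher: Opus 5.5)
The paper gives no proof of this statement: it is quoted from \cite[Theorem 3.2]{duPCTC} and used as a black box, together with the equality statements \eqref{eqFC} and \eqref{eqMTC}. Your argument is a correct, self-contained proof by the rank two bundle method, and it handles the delicate points properly:
\begin{enumerate}
\item $a,b,c$ are coprime by minimality of $r$.
\item Writing $\rho=\alpha e_1+\beta e_2$ in a local frame of $E(r)$ gives $(a,b,c)\subseteq(\alpha,\beta)$, so $Z\subseteq V(a,b,c)$ scheme-theoretically.
\item Two general members of $\langle a,b,c\rangle$ share no component, because the base locus is finite. Hence $\deg Z\le r^2$.
\item Left exactness of $H^0$ on $0\to E(k)\to\OO(k)^3\to\OO(k+d-1)$ gives $H^0(E(r-1))=AR(f)_{r-1}=0$. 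Together with $H^1(\OO(-1))=0$ this yields $h^0(\mathcal I_Z(2r-d))=0$.
\end{enumerate}

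The step you single out as the main obstacle is routine. The image of $\OO^3\to\OO(d-1)$ is $\mathcal I_\Sigma(d-1)$, where $\Sigma$ is the scheme defined by $J_f$. By the Euler relation, its local ideal at each singular point is the Tjurina ideal, so $\deg\Sigma=\tau(C)$. Equivalently, the Hilbert function of $M(f)$ is eventually equal to $\tau(C)$, a fact the paper itself uses in Section~5. Whitney's formula then gives $c(E)=(1+(d-1)h+\tau(C)h^2)^{-1}$ at once, with $h$ the hyperplane class.

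Compared with the bare citation, your route packages everything into the single identity $\tau(C)=\tau(d,r)_{max}-\deg Z$, from which all three inequalities are read off. It also makes the extremal cases used later in the paper transparent. First, $\tau(C)=\tau(d,r)_{max}$ iff $Z=\emptyset$. Since $H^1(\OO(d-1-2r))=0$, this happens iff the extension splits, i.e.\ iff $C$ is free with exponents $(r,d-1-r)$. That forces $2r<d$ and recovers \eqref{eqFC}. Second, $\tau(C)=\tau(d,r)_{max}'$ says precisely that $Z$ has the least length $\binom{2r-d+2}{2}$ compatible with $h^0(\mathcal I_Z(2r-d))=0$, which is the maximal Tjurina condition \eqref{eqMTC}.
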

With this notation, 
\begin{equation} 
\label{eqFC}
\tau(C)= \tau(d,r)_{max}
\end{equation} 
 if and only if
$2r <d$ and $C$ is a free curve, see \cite{duPCTC,DimR}. On the other hand if
$2r\geq d$ and
\begin{equation} 
\label{eqMTC}
 \tau(C)= \tau(d,r)_{max}' 
 \end{equation} 
 then by definition $C$ is a {\it Tjurina maximal curve}, see \cite{maxT}.

\section{Line arrangements of \texorpdfstring{$d\leq 7$}{} lines}

In this section we determine ${\rm mdr}$ in the remaining cases with $d\leq 7$ and record the corresponding exponent sequences. For the various types of curves that occur in these statements we refer the reader to \cite{Brian, NH}.
\bigskip

\begin{thm}
\label{thmA}
Let $\A$ be a line arrangement such that $(d,m)=(6,3)$. Then the following cases are possible.

\begin{enumerate}

\item $\tau(\A)=19$, $n_3=4$ and then $\A$ is free with exponents $(2,3)$.

\item $\tau(\A)=18$, $n_3=3$ and then $\A$ is nearly free with exponents $(3,3,3)$.

\item $\tau(\A)=17$, $n_3=2$ and then $\A$ is minimal POG with exponents $(3,3,4)$.

\item $\tau(\A)=16$, $n_3=1$ and $\A$ is obtained from a nodal arrangement $\B$ of 5 lines by adding a line $L$ passing through exactly one node of $\B$. In this case $\A$ is a curve of type $2B$ with exponents $(3,4,4,4)$.

\end{enumerate}
\end{thm}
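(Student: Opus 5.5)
Let $d=6$ and $m=3$. The plan is to classify by $n_3$ and then read off the exponents from Theorem~\ref{thmR}(3) together with the du Plessis--Wall bounds of Theorem~\ref{thmCTC}.

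\textbf{Counting.} Counting pairs of lines gives $\binom{6}{2}=15=n_2+3n_3$. Hence
\[
\tau(\A)=n_2+4n_3=15+n_3 .
\]
A simple count shows that six lines with only double and triple points have at most four triple points. This bound is attained by the complete quadrilateral, which has $n_3=4$ and $n_2=3$. Since $m=3$, we have $n_3\geq 1$. Therefore $n_3\in\{1,2,3,4\}$, which gives $\tau=16,17,18,19$, as listed in the four cases.

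\textbf{Cases $n_3\le 3$.} By Theorem~\ref{thmR}(3) with $m=3$, $k=1$, we get $r=d-3=3$, and the first $n_3$ exponents equal $3$. We then compare $\tau$ with the bound of Theorem~\ref{thmCTC}. For $r=3\ge d/2$ it gives
\[
\tau(6,3)_{max}'=25-6-\binom{2}{2}=18 .
\]

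For $n_3=3$ we have $\tau=18=\tau'_{max}$, so $\A$ is maximal Tjurina of type $(6,3)$. Its resolution is known from \cite{maxT}, with three generators of degree $r$, so the exponents are $(3,3,3)$; this is the nearly free case. Alternatively, a nearly free curve has exponents $(r,r,r)$ with $\tau=(d-1)^2-r(d-r-1)-1=18$, which matches.

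For $n_3=2$ and $n_3=1$ I would use the standard relation between $\tau$ and the Hilbert function of $D_0(f)$ (equivalently of $N(f)$) to determine the remaining generators. Concretely, I would compute $\dim D_0(f)_k$ for $k=3,4$ from the formula
\[
\dim D_0(f)_k=\dim S_{k}\cdot 3-\dim (J_f)_{k+d-1}
\]
together with $\dim M(f)_j=\tau$ for large $j$. This needs the vanishing of $N(f)$ in the relevant degrees, or the deletion sequence of Theorem~\ref{thm2}.

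\textbf{Using deletion for the last two cases.} For $n_3=1$, the arrangement $\A$ is a generic arrangement $\B$ of $5$ lines plus a line through exactly one node. The reason is that a line passing through two nodes of a generic arrangement would create a second triple point. Apply Theorem~\ref{thm2} to $C_1=\B$ with $|R|=4$. Since $\B$ is nodal with $d_1=5$, its exponents are $(3,3,3,3)$ by Case~2. The sequence then gives $D_0(f)_3$ of dimension $1=n_3$, and $D_0(f)_4$ of dimension $4+2-(\text{image})$. Controlling $N(f_1)_{k+3}$ via the maximal Tjurina structure of $\B$ yields the exponents $(3,4,4,4)$.

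For $n_3=2$, I would similarly delete a line through exactly one triple point to reduce to a case already understood, or use the known minimal POG classification of \cite{Brian}. One can check $(3,3,4)$ against the Euler-characteristic relation for rank-3 plus syzygies, namely $\sum(e_i)$ versus $\tau$, as a consistency check.

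\textbf{Case $n_3=4$ and main obstacle.} The case $n_3=4$ is the complete quadrilateral $xyz(x-y)(y-z)(x-z)$, the reflection arrangement $A_3$, which is free with exponents $(2,3)$, since $2+3=d-1$ and $\tau=19=\tau(6,2)_{max}$.

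The main obstacle is the precise higher exponents in cases (3) and (4). These require controlling the connecting map into $N(f_1)$ in Theorem~\ref{thm2}, or invoking the known structure results for minimal POG curves and type $2B$ curves from the references.
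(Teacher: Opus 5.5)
Your preliminary steps work, though they take a slightly different route from the paper. The paper gets $r\in\{2,3\}$ from \eqref{eqR} and reads off $n_3$ from $\tau=15+n_3$ and the du Plessis--Wall bounds; you bound $n_3$ by incidences instead. The count $3n_3\le 2d$, Theorem~\ref{thmR}(3) giving $r=3$ and $e_1=\cdots=e_{n_3}=3$ for $n_3\le 3$, and the maximal Tjurina argument for $n_3=3$ are all correct.

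For $n_3=4$ you should add two small points:
\begin{enumerate}
\item Why the arrangement must be the complete quadrilateral: each line carries exactly two of the four triple points, and distinct lines give distinct pairs, so the lines are the six joins of four points in general position.
\item ``Free because $\tau=\tau(6,2)_{max}$'' needs $r=2$ first. This follows from \eqref{eqR}, since $r=3$ would force $\tau\le 18$.
\end{enumerate}

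\textbf{The genuine gap is in cases (3) and (4):} the higher exponents, which are the real content there, are never derived. In (4) you write the same deletion sequence as the paper, but the decisive point is only announced: that $H^0(L,\OO_L(1))\to N(f_1)_7$ is zero. You also do not say why no generator occurs in degree $\ge 5$. In (3) you list options (deletion, a citation, a check of $\sum e_i$ against $\tau$) without carrying any of them out. A numerical check against $\tau$ cannot exclude other degree sequences, so it is not a proof.

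The paper fills (4) using self-duality of the Jacobian module \cite[Theorem 3.2]{Port}. For the nodal quintic $\B$ this gives $N(f_1)_7\cong N(f_1)_2=(I_{f_1})_2=0$, because a conic through the ten nodes would contain four points of each line, hence all five lines. So $\dim D_0(f)_4=4+2=6$, giving $6-3=3$ new generators in degree $4$. The type~$2$ classification of \cite{NH} then yields $(3,4,4,4)$, type $2B$. For (3) the paper simply invokes \cite[Theorem 1.5]{Brian} with $r=3$, $\tau=17$.

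Your own Hilbert-function idea would also work uniformly once you supply the missing vanishing.
\begin{enumerate}
\item The formula $\dim D_0(f)_{d-2}=\tau+3\binom{d}{2}-\binom{2d-1}{2}$ from \cite[Theorem 1.1]{Der} (used in the proof of Theorem~\ref{thmB}) amounts to $\dim M(f)_{2d-3}=\tau$. It gives $\dim D_0(f)_4=\tau-10$.
\item If $n_3=2$ and $D_1,D_2$ span $D_0(f)_3$, there is no relation $\ell_1D_1+\ell_2D_2=0$ with $\ell_i\in S_1$. Proportional $\ell_i$ would make $D_1,D_2$ dependent, and non-proportional ones would force $\ell_2\mid D_1$, contradicting $r=3$. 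Hence $\dim S_1D_0(f)_3=6$.
\item So there is $7-6=1$ new generator in degree $4$ when $n_3=2$, and $6-3=3$ when $n_3=1$.
\item Since $\A$ is essential, the bound $e_s\le d-2=4$ stops the sequence, giving $(3,3,4)$ and $(3,4,4,4)$.
\end{enumerate}
After that, ``minimal POG'' and ``type $2B$'' are just the names from \cite{Brian,NH}.
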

 
\proof 
The inequality \eqref{eqMS} yields in this case $r \geq 2$.
It follows from 
\cite{Dca} that either $r=m-1=2$ and $\A=\A'$ is free with exponents $(2,3)$,
or $\A=\A''$ has $r=d-m=3$.
In the first situation we get
$$\tau(\A')=25-6=19,$$
while in the second one we get
$$\tau(\A'')\leq 25-6-1=18.$$
Consider the system of equations, where $n_k$ denotes the number of points in $\A$ of multiplicity $k$.
$$n_2+3n_3=\binom{d}{2}=15 \text{ and } n_2+4n_3=\tau(\A).$$
It follows that $\A'$ has $n_3=4$ and $\A''$ has $1 \leq n_3 \leq 3$.
In conclusion, the triple $(d,m,\tau(\A))$ or the triple $(d,m,n_3)$ determines $r$ in this case.
Note that if $n_3 \geq 2$ in the case $\A''$, it follows that
$$\tau(\A'') \geq 10+3+3+1=17,$$
see \cite[Theorem 1.1 (i)]{minT}.
Similarly, when $n_3=1$, we have
$$\tau(\A'') \geq 10+3+2+1=16,$$
see \cite[Theorem 1.1 (ii)]{minT}.
When $\tau(\A'')=18$, then $\A''$ is nearly free with exponents $(3,3,3)$,
and when $\tau(\A'')=17$, then $\A''$ is a minimal POG arrangement with exponents $(3,3,4)$, see \cite[Theorem 1.5]{Brian}.
To get information on the exponents of $\A$ in case (4), we apply Theorem \ref{thm2} with $C_1=\B: f_1=0$ and $k=3$ and get
$$ 0 = D_0(f_1)_{2}  \longrightarrow  D_0(f)_3 \to H^0(L,\OO_{L}) =\C,$$
since $|R|=4$. It follows that $e_1=3$ and $e_2>3$.
If we take now $k=4$ we get
$$ 0 \to D_0(f_1)_{3}=\C^4  \longrightarrow  D_0(f)_4 \to H^0(L,\OO_{L}(1))=\C^2 \to N(f_1)_7=0,$$
where the last vanishing follows from $\dim N(f_1)_7 =\dim N(f_1)_2=0$,
see \cite[Theorem 3.2]{Port}.
Hence $e_2=e_3=e_4=4$. It follows that the type of $\A$ is
$$t(\A)=e_1+e_2-d+1=2,$$
see \cite{NH}. Using the classification of type 2 curves given in
\cite[Proposition 1.11]{NH} we infer that 
$\A$ is a curve of type $2B$.
\endproof
\begin{rk}
Observe that for $(d,m)=(6,3)$ the case $n_{3}=5$ is not possible due to our upper-bound on the total Tjurina number. Indeed, since
\[
n_2+3n_3=15,\qquad \tau(\mathcal A)=n_2+4n_3=15+n_3,
\]
the case \(n_3=5\) would give \(\tau(\mathcal A)=20\).
This is impossible, because for a line arrangement of six lines one has
\[
\tau(\mathcal A)\le (d-1)^2-r(d-1-r)\le 25-6=19.
\]
Hence \(n_3\le 4\).
\end{rk}

\begin{thm}
\label{thmB}
Let $\A$ be a line arrangement such that $(d,m)=(7,3)$. Then the following cases are possible.

\begin{enumerate}

\item $\tau(\A)=27$, $n_3=6$ and then $\A$ is  free with exponents $(3,3)$.

\item $\tau(\A)=26$, $n_3=5$ and then $\A$ is nearly free with exponents $(3,4,4)$.

\item $\tau(\A)=25$, $n_3=4$ and then the following two cases occur.

\begin{enumerate}
\item If there is a line $L_0 \in \A$ containing no triple point, then $\A$ is a plus-one generated curve with exponents $(3,4,5)$.

\item If there is a line $L_1 \in \A$ containing exactly one triple point, then $r=4$ and $\A$ is a maximal Tjurina curve of type $(7,4)$.

\end{enumerate}

\item $\tau(\A)=24$, $n_3=3$, then $e_1=e_2=e_3=4$ and the following cases may occur.

\begin{enumerate}
\item If there is a line $L_0 \in \A$ containing no triple point, then $\A$ has type 2B with exponents $(4,4,4,5)$.

\item If there is no line $L_0 \in \A$ containing no triple point, then  $\A$ has type 2A with exponents $(4,4,4)$.

\end{enumerate}

\item $\tau(\A)=23$, $n_3=2$, then $\A$ has type 2B with exponents $(4,4,5,5)$.

\item $\tau(\A)=22$, $n_3=1$, then $\A$ has type 3C with exponents
$(4,5,5,5,5)$.

\end{enumerate}
\end{thm}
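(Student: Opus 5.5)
The plan follows the pattern of Theorem~\ref{thmA}. For $(d,m)=(7,3)$ the inequalities \eqref{eqR} give $2\le r\le 4$, and \eqref{eqMS} gives $r\ge 14/3-2$, hence $r\ge 3$. Counting pairs of lines gives $n_2+3n_3=21$, so
$$\tau(\A)=n_2+4n_3=21+n_3.$$
Theorem~\ref{thmCTC} with $r=3<7/2$ gives $\tau\le 36-9=27$, with equality if and only if $\A$ is free. With $r=4\ge d/2$ it gives $\tau\le 36-8-\binom{3}{2}=25$. Theorem~\ref{thmR}(3) with $k=1$ shows that $n_3\le 3$ forces $r=4$ and $e_1=\dots=e_{n_3}=4$, and that $\dim D_0(f)_4=n_3$. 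So $n_3\ge 5$ forces $r=3$, while $n_3\le 3$ forces $r=4$.

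For $n_3=6$ we have $\tau=27$ and $\A$ is free with exponents $(3,3)$. For $n_3=5$ we have $\tau=26$ and $r=3$; since $\tau=\tau(7,3)_{max}-1$, $\A$ is nearly free with exponents $(3,4,4)$. I would argue this via the du Plessis--Wall characterization, or by citing \cite{Brian}.

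For $n_3=4$ (so $\tau=25$), both $r=3$ and $r=4$ are a priori possible, and the dichotomy of case (3) should come from addition--deletion. If some line $L_0$ carries no triple point, set $\B=\A\setminus L_0$. Then $\B$ has $6$ lines and $4$ triple points, so it is free with exponents $(2,3)$ by Theorem~\ref{thmA}(1), and $|R|=6$. Theorem~\ref{thm2} with $k=3$ embeds $D_0(f_1)_2\neq 0$ into $D_0(f)_3$, so $r=3$. Further degrees of Theorem~\ref{thm2}, together with the vanishing of $N(f_1)$ for free $\B$, then give the exponents $(3,4,5)$, i.e.\ plus-one generated.

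If instead some line $L_1$ contains exactly one triple point, then $\B=\A\setminus L_1$ has $(d,m)=(6,3)$ with $n_3=3$, so it is nearly free with exponents $(3,3,3)$ and $r(\B)=3$; here $|R|=5$. Theorem~\ref{thm2} with $k=3$ reads
$$0\to D_0(f_1)_2=0\to D_0(f)_3\to H^0(\OO_L(-1))=0,$$
so $r=4$. Then $\tau=25=\tau(7,4)'_{max}$, which is exactly maximal Tjurina by \eqref{eqMTC}.

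For $n_3\le 3$ we already have $e_1=\dots=e_{n_3}=4$. I would delete a line through the smallest number of triple points and apply Theorem~\ref{thm2} at $k=4$ and $k=5$, using Theorem~\ref{thmA} for the deleted $6$-line arrangement $\B$. This gives $\dim D_0(f)_4$ and $\dim D_0(f)_5$, provided $N(f_1)$ vanishes in the relevant degree, which I would get from \cite[Theorem 3.2]{Port} and self-duality of $N$. The type is then $t=e_1+e_2-d+1$, which is $2$ when $e_1=e_2=4$ and $3$ when $(e_1,e_2)=(4,5)$. The classification of \cite[Proposition 1.11]{NH} (2A versus 2B, and 3C) then identifies the case, and the presence or absence of a line without triple points decides between 2A and 2B for $n_3=3$.

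The main obstacle is controlling $N(f_1)_{k+d_1-2}$ in Theorem~\ref{thm2} when $\B$ is not free, so that the last map is zero and the dimension count is exact. The subcase $n_3=3$ with no free line $L_0$ is the hardest, since there $\B$ is of type 2B or minimal POG.
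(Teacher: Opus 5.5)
\textbf{Verdict.} Your treatment of parts (1)--(3), (5) and (6) is sound and close to the paper. You should, however, add the incidence count $3n_3=12<14=2d$, which shows that in (3) a line with zero or one triple point always exists. The genuine gap is in part (4), and it is not where you locate it.

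\textbf{The obstacle you name is not the real one.} Controlling $N(f_1)$ is harmless. An element of $I_{f_1}$ of degree $j\le d_1-2=4$ vanishes to order at least $m_p-1$ at every multiple point $p$. Restricting to the lines one by one shows it is divisible by all six lines, so it is zero. Hence $N(f_1)_j=0$ for $j\le 4$, and self-duality with $T=3(d_1-2)=12$ gives $N(f_1)_8=N(f_1)_9=0$.

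\textbf{Exact dimensions do not separate the two subcases.} In both (4a) and (4b) one has $\dim D_0(f)_4=3$ and $\dim D_0(f)_5=24+3\binom{7}{2}-\binom{13}{2}=9$. The type~2 classification with $\tau=24$ allows both $(4,4,4)$ and $(4,4,4,5)$. What decides between them is $\rank(\mu_4)$, i.e.\ whether the nine derivations $x\widetilde D_{p_i},y\widetilde D_{p_i},z\widetilde D_{p_i}$ are linearly independent in $D_0(f)_5$. Your sentence that the presence or absence of a line without triple points ``decides between 2A and 2B'' is exactly the claim to be proved, with no argument behind it.

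\textbf{Subcase (4a).} Here the missing step is supplied by \cite[Corollary 1.9(2)]{Der}, as in the paper. In your language: take $\B=\A\setminus L_0$, which is nearly free with $|R|=6$. All of $D_0(f)_4$ maps to $H^0(L_0,\OO_{L_0}(-1))=0$. So $S_1D_0(f)_4$ lies in the $8$-dimensional image of $D_0(f_1)_4$, giving $\rank(\mu_4)\le 8$ and forcing a new generator in degree $5$.

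\textbf{Subcase (4b).} Deletion alone does not help. Removing a line with one triple point leaves a minimal POG arrangement, and asking whether the extra degree-$4$ derivation of $\A$ lies in $S_1D_0(f_1)_3$ is the original question in disguise. The ingredient you are missing is the local derivations $\widetilde D_{p_i}$:
\begin{itemize}
\item If the three triple points are not collinear, at most two of the joining lines belong to $\A$. Then \cite[Theorem 3.3]{Der} gives the direct sum $S_1\widetilde D_{p_1}\oplus S_1\widetilde D_{p_2}\oplus S_1\widetilde D_{p_3}$, of dimension $9=\dim D_0(f)_5$.
\item If they are collinear, the common line $H$ must lie in $\A$. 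Restricting a putative relation $\sum P_i\widetilde D_{p_i}=0$ to $H$ forces every $P_i$ to be divisible by the equation of $H$. Dividing out gives a constant linear relation among the $\widetilde D_{p_i}$, contradicting \cite[Theorem 1.4]{Der}.
\end{itemize}
In either case $D_0(f)_5=S_1D_0(f)_4$, so the exponents are $(4,4,4)$. Without an argument of this kind, (4b) remains unproved.
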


\proof

The inequality \eqref{eqMS} yields in this case $r \geq 3$.
On the other hand, the results in \cite{Dca} tell
us that
$r \leq  d-m=4.$

If $r=3$ then we know that
$$18=(d-1)(d-1-r) \leq \tau(\A) \leq 36-9=27.$$
In this case $\tau(\A)=27$ if and only if $\A$ is free with exponents $(3,3)$ and $\tau(\A)=26$ if and only if $\A$ is nearly free, hence it has exponents $(3,4,4)$.
The case $\tau(\A)=25$ corresponds to $\A$ being a minimal POG arrangement, hence the exponents are $(3,4,5)$ in this last subcase,
see \cite[Theorem 1.5]{Brian}. 

In fact we have in this case
$$\tau(\A) \geq 18+3+3+1=25,$$
see \cite[Theorem 1.1 (ii)]{minT}.
And if $r=4$, then
$$12=(d-1)(d-1-r) \leq \tau(\A) \leq 36-8-3=25.$$
Hence only the case $\tau(\A)=25$ may cause problems in deciding the value of $r\in \{3,4\}$. 

Consider the system of equations
$$n_2+3n_3=\binom{d}{2}=21 \text{ and } n_2+4n_3=\tau(\A)=25.$$
It follows that $n_2=9$ and $n_3=4$ when $d=7$ and $\tau(\A)=25$.
If every line of \(\mathcal A\) contained at least two triple points, then counting incidences between lines and triple points would give
\[
3n_3\ge 2d=14,
\]
hence \(n_3\ge 5\), contradicting \(n_3=4\).

We record that the two possibilities which occur below are in fact
mutually exclusive. Suppose first that there is a line
$L_0\in\A$ containing no triple point. Every other line contains at
most two triple points. Indeed, if a line $L\neq L_0$ contained three
triple points, then each of these three points would require two
further arrangement lines through it. Since two distinct points of
$L$ cannot lie on the same further arrangement line, this would use
all six lines distinct from $L$, including $L_0$, contradicting the
choice of $L_0$. On the other hand,
\[
        \sum_{L\in\A} n_3(L)=3n_3=12.
\]
Thus the six lines different from $L_0$ contain exactly two triple
points each. In particular, no line contains exactly one triple
point.

Conversely, if there is no line containing zero triple points, then
every line contains at least one triple point. Since
\[
        \sum_{L\in\A} n_3(L)=12<14=2d,
\]
at least one line contains exactly one triple point. Hence precisely
one of the two alternatives in part~(3) occurs. It follows that there is either a line $L_0 \in \A$ without triple points, or a line $L_1 \in \A$ containing exactly one triple point.
Let $\B_0$ (resp. $\B_1$) be the line arrangement obtained from $\A$ by deleting $L_0$ (resp. $L_1$). Using Bezout Theorem, we see that the intersection $R_0=\B_0 \cap L_0$ (resp. $R_1=\B_1 \cap L_1$) consists of 6 (resp. 5) points.

The Tjurina number $\tau(B_0)$ is obtained from $\tau(\A)$ by substracting 
6, since the 6 nodes of $\A$ on $L_0$ are now simple points in $\B_0$. Hence $\tau(\B_0)=25-6=19$ and $r_0=r(\B_0)\geq r-1\geq 2$.  
 Hence, by Theorem \ref{thmA} (1),  we get  $r_0=2$ and it follows that $r=3$ if such a line $L_0$ exists. Since $\B_0$ is free in this case, it follows from \cite{POG,MP} that $\A$ is a plus-one generated curve with the given exponents.

Consider now the case when a line $L_1$ exists. The Tjurina number $\tau(B_1)$ is obtained from $\tau(\A)$ by substracting 
7, since the 4 nodes of $\A$ on $L_1$ are now simple points in $\B_1$
and the triple point of $\A$ on $L_1$ becomes a node in $\B_1$. Hence $\tau(B_1)=25-7=18$.  
 Hence, by Theorem \ref{thmA} (2),  we get  $r_1=r(\B_1)=3$.
To  apply Theorem \ref{thm2} to the case at hand, we take $C_1=\B_1:f_1=0$, $k=3$ and get
\[
0=D_0(f_1)_2\longrightarrow D_0(f)_3
\longrightarrow H^0\bigl(L_1,\OO_{L_1}(4-|R_1|)\bigr)
=H^0\bigl(L_1,\OO_{L_1}(-1)\bigr)=0,
\]
since $|R_1|=5$.
It follows that $r=4$ when there is a line $L_1$.

It remains to consider the cases $n_3\leq 3$. Since
\[
        n_2+3n_3=21,
        \qquad
        \tau(\A)=n_2+4n_3,
\]
we have
\[
        \tau(\A)=21+n_3.
\]

Suppose first that $n_3=3$. Then $\tau(\A)=24$. By
Theorem~\ref{thmR},
\[
        r=4,\qquad
        \dim D_0(f)_4=3,
\]
and the three minimal generators in degree $4$ are the local
derivations associated with the three triple points. In particular,
the first three exponents are
\[
        e_1=e_2=e_3=4.
\]
Since
\[
        e_1+e_2-d+1=4+4-7+1=2,
\]
the arrangement has type $2$. The classification of type $2$ curves,
see \cite[Proposition 1.11]{NH}, leaves precisely the two degree
sequences
\[
        (4,4,4)
        \qquad\text{and}\qquad
        (4,4,4,5).
\]

Assume first that there is a line $L_0\in\A$ containing no triple
point. Then
\[
        |L_0\cap(\A\setminus L_0)|=6.
\]
The addition--deletion estimate
\cite[Corollary 1.9(2)]{Der} implies that the maximal exponent is at
least $5$. Hence the sequence $(4,4,4)$ is impossible, and therefore
\[
        \operatorname{Exp}(\A)=(4,4,4,5).
\]
Thus $\A$ is of type $2B$.

Assume now that no line of $\A$ contains zero triple points. Let
$p_1,p_2,p_3$ be the three triple points. Suppose first that they are
not collinear. If all three lines determined by the pairs
$p_i,p_j$ belonged to $\A$, then these three lines, together with the
three remaining lines needed to make each $p_i$ a triple point, would
account for only six lines containing triple points. The seventh line
would contain no triple point, a contradiction. Hence at most two of
the three lines determined by $p_1,p_2,p_3$ belong to $\A$.
By \cite[Theorem 3.3]{Der} one therefore has the direct sum
\[
S_1\widetilde D_{p_1}\oplus
S_1\widetilde D_{p_2}\oplus
S_1\widetilde D_{p_3}
\subset D_0(f)_5 .
\]
Its dimension is $9$. On the other hand,
\cite[Theorem 1.1]{Der} gives
\[
\dim D_0(f)_5
=
24+3\binom{7}{2}-\binom{13}{2}
=
9.
\]
Hence
\[
D_0(f)_5=
S_1D_0(f)_4,
\]
so there is no new minimal generator in degree $5$. Thus
\[
        \operatorname{Exp}(\A)=(4,4,4).
\]

It remains only to consider the possibility that $p_1,p_2,p_3$ are
collinear. Their common line must then belong to $\A$; otherwise an
arrangement line could contain at most one of the three triple
points, and the nine line--triple incidences would require at least
nine distinct lines. Choose coordinates such that the common line is
\[
        H:z=0
\]
and
\[
        p_1=(1:0:0),\qquad
        p_2=(0:1:0),\qquad
        p_3=(1:1:0).
\]
The six remaining arrangement lines occur in three pairs through
$p_1,p_2,p_3$, respectively. Up to multiplication by nonzero
constants, their equations can be written as
\[
y+\alpha_i z,\qquad
x+\beta_i z,\qquad
x-y+\gamma_i z,
\qquad i=1,2.
\]
Consequently, after rescaling the corresponding local derivations,
\[
\left.f_{2p_1}\right|_H=x^2(x-y)^2,\qquad
\left.f_{2p_2}\right|_H=y^2(x-y)^2,\qquad
\left.f_{2p_3}\right|_H=x^2y^2.
\]
Consider a relation
\[
P_1\widetilde D_{p_1}
+
P_2\widetilde D_{p_2}
+
P_3\widetilde D_{p_3}=0,
\qquad P_i\in S_1.
\]
Evaluating at the three triple points and using
\cite[Theorem 1.3]{Der} gives
\[
P_1(p_1)=P_2(p_2)=P_3(p_3)=0.
\]
Hence, on $H$,
\[
P_1=ay,\qquad P_2=bx,\qquad P_3=c(x-y)
\]
for some $a,b,c\in\C$. Restricting the relation to $H$ and using the
formula for the local derivations from \cite[Theorem 1.3]{Der}, its
first two components, up to nonzero common factors, are
\[
(3a-2b-2c)x+(-5a+4b+5c)y
\]
and
\[
(4a-5b-5c)x+(-2a+3b+2c)y.
\]
Thus
\[
\begin{aligned}
3a-2b-2c&=0,\\
-5a+4b+5c&=0,\\
4a-5b-5c&=0,\\
-2a+3b+2c&=0.
\end{aligned}
\]
This system has only the solution $a=b=c=0$. Therefore
$P_i|_H=0$ for all $i$, so each $P_i$ is divisible by $z$. Dividing
the original relation by $z$ would give a constant linear relation
among
\[
        \widetilde D_{p_1},
        \widetilde D_{p_2},
        \widetilde D_{p_3},
\]
which is impossible by \cite[Theorem 1.4]{Der}. Hence also in the
collinear case
\[
D_0(f)_5=S_1D_0(f)_4
\]
and
\[
        \operatorname{Exp}(\A)=(4,4,4).
\]
This completes the proof of part~(4).

For $n_3=2$, one has $\tau(\A)=23$. Theorem~\ref{thmR} gives
\[
        r=4,\qquad e_1=e_2=4.
\]
Hence $\A$ is again a curve of type $2$, and the type $2$
classification \cite[Proposition 1.11]{NH}, together with
$\tau(\A)=23$, gives uniquely
\[
        \operatorname{Exp}(\A)=(4,4,5,5).
\]
Thus $\A$ has type $2B$, proving part~(5).

Finally, let $n_3=1$. Then $\tau(\A)=22$, and
Theorem~\ref{thmR} gives
\[
        r=4,\qquad \dim D_0(f)_4=1.
\]
Moreover, \cite[Theorem 1.1]{Der} gives
\[
\dim D_0(f)_5
=
22+3\binom{7}{2}-\binom{13}{2}
=
7.
\]
The multiples of the unique degree $4$ generator span a
$3$-dimensional subspace of $D_0(f)_5$. Hence there are exactly
\[
        7-3=4
\]
new minimal generators in degree $5$. Since for an essential
arrangement of seven lines every exponent is at most
$d-2=5$, this determines the complete degree sequence:
\[
        \operatorname{Exp}(\A)=(4,5,5,5,5).
\]
This is the type $3C$ case, and proves part~(6).
\endproof
\begin{rk}
In the setting of $(d,m)=(7,3)$, the value \(n_3=7\) would imply \(n_2=0\), hence the intersection lattice would be the Fano plane. Since the Fano plane is not realizable over a field of characteristic zero, this case cannot occur for complex line arrangements.
\end{rk}
The following examples illustrate the alternatives appearing in Theorem 3.3.
\begin{ex}
\label{exB}
(i) The line arrangement
$$\A: f=xyz(x-z)(y-z)(x-y)(x+y-3z)=0$$
is as in (3) (a) above with $L_0:x+y-3z=0$ and has exponents $(3,4,5)$.
On the other hand, the arrangement
$$\A: f=xyz(x-z)(y-z)(x+y-2z)(x-7y)=0$$
is as in (3) (b) with $L_1:x-7y=0$ and exponents $(4,4,4,4)$.

(ii) The line arrangement
$$\A: f= xyz(x+y)(x+z)(y+z)(x+2y+5z)=0$$
is as in (4) (a) above with $L_0:x+2y+5z=0$ and has exponents $(4,4,4,5)$.

The line arrangement
$$\A: f= (x-y)(x-z)(y-z)(x+z)(x+2z)(y+3z)(y+5z) =0$$
is as in (4) (b) above  and has exponents $(4,4,4)$.

\end{ex}

\section{Line arrangements of \texorpdfstring{$8$}{8} lines} 
The multiplicities $m$ that have to be discussed in this case are $3$ and $4$. We start with multiplicity $4$.

\begin{thm}
\label{thmC}
Let $\A$ be a line arrangement such that $(d,m)=(8,4)$. Then the following cases are possible.

\begin{enumerate}

\item If $\tau(\A)=37$, then $\A$ is free with exponents $(3,4)$.

\item If $\tau(\A)=36$,  then $\A$ is nearly free with exponents $(4,4,4)$.

\item If $\tau(\A)=35$,  then $\A$ is minimal POG with exponents $(4,4,5)$.

\item If $\tau(\A)=34$, then $r=4$ and the numerical possibilities are $(n_3,n_4)=(0,2)$ and $(n_3,n_4)=(3,1)$. More precisely:
\begin{enumerate}
\item If $(n_3,n_4)=(0,2)$, then $\A$ is a POG arrangement with exponents $(4,4,6)$.
\item If $(n_3,n_4)=(3,1)$, then two exponent sequences may occur: either $\A$ is a POG arrangement with exponents $(4,4,6)$, or $\A$ has type $2B$ with exponents $(4,5,5,5)$.
\end{enumerate}

\item If $\tau(\A)=33$, then $(n_3,n_4)=(2,1)$, $r=4$, and the following two cases may occur.
\begin{enumerate}
\item $\A$ has type 2A with exponents $(4,5,5)$.
\item $\A$ has type 2B with exponents $(4,5,5,6)$.
\end{enumerate}

\item If $\tau(\A)=32$,  then $\A$ has type 2B with exponents $(4,5,6,6)$.

\item If $\tau(\A)=31$,  then $\A$ has type 3C with exponents $(4,6,6,6,6)$.

\end{enumerate}
\end{thm}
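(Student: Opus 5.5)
I would follow the template of Theorems~\ref{thmA} and \ref{thmB}. First pin down $r$ from the invariants. For $(d,m)=(8,4)$ the bound \eqref{eqR} gives $3=m-1\leq r\leq d-m=4$. Theorem~\ref{thmCTC} then separates the two values of $r$.

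For $r=3$ we get $\tau(\A)\leq 49-12=37$, with equality exactly when $\A$ is free with exponents $(3,4)$. For $\tau=36$ the arrangement is nearly free, and for $\tau=35$ it is minimal POG; these follow from \cite[Theorem 1.5]{Brian}. The lower bound of \cite[Theorem 1.1]{minT} should force $\tau(\A)\geq 35$ when $r=3$.

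For $r=4=d/2$ we get $\tau(\A)\leq \tau(8,4)'_{max}=49-12-1=36$. So I have to rule out $r=4$ for $\tau\in\{35,36,37\}$, and $r=3$ for $\tau\leq 34$.

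The combinatorial bookkeeping uses
\[
n_2+3n_3+6n_4=28,\qquad \tau=n_2+4n_3+9n_4,
\]
so $\tau=28+n_3+3n_4$. From this I list the admissible pairs $(n_3,n_4)$ for each $\tau$; realizability restricts them further. For instance, two quadruple points already use $7$ lines, which forces $n_3=0$ when $n_4=2$. This is how one gets $(0,2)$ or $(3,1)$ for $\tau=34$, and $(2,1)$ for $\tau=33$.

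For $\tau\leq 34$ I want $r=4$. When $n_4\leq 3$, Theorem~\ref{thmR}(2) applies once I know $\dim D_0(f)_4\leq 3$. I would compute $\dim D_0(f)_4$ either from the formula of \cite[Theorem 1.1]{Der}, as in the proof of Theorem~\ref{thmB}, or by deletion. In the deletion approach, pick a line $L$ through few multiple points, so that $\B=\A\setminus L$ is an arrangement of $7$ lines already classified in Theorem~\ref{thmB} (or with $2m>d$). Then apply Theorem~\ref{thm2} with $k=3$ to get $D_0(f)_3=0$.

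For the exponents with $\tau\leq 33$, I use that $e_1=4$ and $e_1+e_2-d+1$ gives the type. The type-2 and type-3 classification of \cite[Proposition 1.11]{NH}, together with the value of $\tau$, then yields $(4,5,6,6)$ for $\tau=32$ and $(4,6,6,6,6)$ for $\tau=31$. In the latter case I count $\dim D_0(f)_5$ and $\dim D_0(f)_6$ via \cite[Theorem 1.1]{Der} against the multiples of lower generators, as was done for $n_3=1$ in Theorem~\ref{thmB}.

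The main obstacle is $\tau=34$ with $(n_3,n_4)=(3,1)$, and likewise $\tau=33$. Here the lattice numerics do not fix the exponents, so I must show that both sequences occur, and that only these occur. I would compute $\dim D_0(f)_5$ and compare it with $S_1 D_0(f)_4$. The sequence $(4,4,6)$ corresponds to two degree-4 generators; this happens when a second independent degree-4 derivation exists beyond $\widetilde D_p$ for the quadruple point $p$, detected via deletion to a nearly free $7$-line arrangement. Otherwise the type is 2B with $(4,5,5,5)$. Deciding which case occurs should depend on the geometry, namely whether the three triple points lie on the lines through the quadruple point, so I would split into those configurations. To show both occur I would exhibit explicit examples and verify them by computer. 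For $(0,2)$, deleting the line joining the two quadruple points should give a free or nearly free arrangement, and \cite{POG,MP} then yields the POG exponents $(4,4,6)$.
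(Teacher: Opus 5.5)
\textbf{Main gap: the dichotomy between $r=3$ and $r=4$ is set up backwards.} You propose to show $r=3$ for $\tau\in\{35,36,37\}$ (free, nearly free, minimal POG) and to rule out $r=4$ there. But the statement has $r=4$ for $\tau=36$ and $\tau=35$, with exponents $(4,4,4)$ and $(4,4,5)$. A nearly free or minimal POG arrangement with $r=3$ would have exponents $(3,5,5)$ or $(3,5,6)$. Such arrangements must be \emph{excluded}, and the lower bound of \cite{minT} cannot do that.

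The missing input is that $r=m-1$ forces freeness. The paper takes this from \cite{Dca}, as in Theorem~\ref{thmA}, and you can see it directly. Let $p$ be the quadruple point. The local derivation $\widetilde D_p$ lies in $D_0(f)_4$ and is primitive, so it is not an $S$-multiple of a nonzero $D\in D_0(f)_3$. Since $3+4=d-1$, Saito's criterion makes $D,\widetilde D_p$ a basis. Hence $r=3$ if and only if $\A$ is free with exponents $(3,4)$, if and only if $\tau=37$. It follows that $r=4$ for every $\tau\le 36$. The bound $\tau(8,4)'_{max}=36$ and \cite{Brian} then give $(4,4,4)$ at $\tau=36$ and $(4,4,5)$ at $\tau=35$. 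This also makes your deletion argument for $D_0(f)_3=0$ when $\tau\le 34$ unnecessary.

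\textbf{Other incorrect steps.}

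\emph{Claim that $n_4=2$ forces $n_3=0$.} This is false. If the two quadruple points are joined by a line of $\A$, they use only seven lines. The eighth line can pass through up to three of the nodes off the joining line; Remark~\ref{rkC}(iii) realizes $(n_3,n_4)=(3,2),(2,2),(1,2)$. For $\tau=34$ the two pairs come simply from $n_3+3n_4=6$ with $1\le n_4\le 2$.

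\emph{The case $(0,2)$.} The line joining the quadruple points need not belong to $\A$; two disjoint pencils of four lines is a counterexample. When it does belong to $\A$, deleting it leaves seven lines with $n_3=2$. That arrangement is of type $2B$ by Theorem~\ref{thmB}(5), not free or nearly free. The paper argues uniformly instead: Theorem~\ref{thmR}(1) gives $\dim D_0(f)_4\ge n_4=2$, so $e_1=e_2=4$ and the arrangement has type $1$. Then $\tau=34$ forces $(4,4,6)$.

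\emph{Use of \cite[Theorem 1.1]{Der}.} That result only computes $\dim D_0(f)_{d-2}$, which is degree $6$ here. It cannot give $\dim D_0(f)_4$ or $\dim D_0(f)_5$.

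\emph{The case $\tau=31$.} Here $(n_3,n_4)=(0,1)$. The paper notes that a line $L$ not through the quadruple point avoids all multiple points of the other seven lines. Those seven lines form a type $2B$ arrangement with exponents $(3,5,5,5)$. This yields exponents $(4,6,6,6,e_5)$, and the type $3C$ formula gives $e_5=6$.

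\emph{Remaining cases.} For $\tau=34$ with $(n_3,n_4)=(3,1)$ and for $\tau=33$, your plan roughly matches the paper. The paper narrows to the listed sequences via the type $1$ and type $2$ classifications, then exhibits explicit examples.
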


\begin{rk}
\label{rkC}
(i) Even when the numerical type of the arrangement $\A$, namely the sequence $(n_2,n_3,n_4)$, is fixed in Theorem~\ref{thmC}, there may be several possibilities for the intersection lattice $L(\A)$. For instance, in case (4), when $(n_2,n_3,n_4)=(16,0,2)$, there are at least two different intersection lattices. One is obtained by taking two points in $\PP^2$ and four otherwise generic lines through each of them; see \cite[Proposition 1.17]{NH}. The other is obtained by taking two points, joining them by one arrangement line, and adding three further lines through each point. This gives an arrangement $\B$ of seven lines with two quadruple points and nine nodes. Adding an eighth line $L$ which avoids the singular points of $\B$ creates seven new nodes and gives again $(n_2,n_3,n_4)=(16,0,2)$. The two intersection lattices are distinct.

(ii) One always has $n_4\leq 2$. Indeed, for a quadruple point $p$, let $I_p$ be the set of the four arrangement lines through $p$. If $p\neq q$, then
\[
        |I_p\cap I_q|\leq 1,
\]
since two distinct lines cannot meet at two distinct points. Suppose that there are three quadruple points $p,q,r$. If no arrangement line passes through all three, then
\[
        |I_p\cup I_q\cup I_r|\geq 12-3=9.
\]
If one arrangement line passes through all three, then the three pairwise intersections coincide with that line and
\[
        |I_p\cup I_q\cup I_r|\geq 12-3+1=10.
\]
Both inequalities contradict $d=8$. Hence $n_4\leq 2$.

(iii) The combinatorial identities
\[
        n_2+3n_3+6n_4=28,
        \qquad
        \tau(\A)=n_2+4n_3+9n_4
\]
give
\[
        n_3+3n_4=\tau(\A)-28.
\]
Since $m=4$, one has $n_4\geq 1$, while part (ii) gives $n_4\leq 2$. Thus, for $\tau=37,36,35,34$, the numerically allowed pairs $(n_3,n_4)$ are respectively
\[
\begin{array}{c|c}
\tau & \text{possible }(n_3,n_4)\\
\hline
37 & (6,1)\text{ or }(3,2)\\
36 & (5,1)\text{ or }(2,2)\\
35 & (4,1)\text{ or }(1,2)\\
34 & (3,1)\text{ or }(0,2)
\end{array}.
\]
Within the second construction in part (i), one has $n_4=2$. If the added line $L$ is chosen through $t\in\{0,1,2,3\}$ suitable nodes of $\B$ and otherwise avoids its singular locus, then $n_3=t$. Hence, within this particular construction, cases (1), (2), and (3) of Theorem~\ref{thmC} are realized by $t=3,2,1$, respectively, while the $(n_3,n_4)=(0,2)$ branch of case (4) is obtained for $t=0$. This correspondence is specific to that construction and is not a classification of all numerical types occurring in the theorem.
\end{rk}

\proof
The inequality \eqref{eqMS} yields in this case $r \geq 2$.
Since $m=4$, we have using \eqref{eqR} that $3 =m-1 \leq r \leq 4= d-m$. It follows that
$r=3$ if and only if
$$\tau(\A)=49-12=37$$
and in this case $\A$ is free with exponents $(3,4)$, see for instance \cite[Corollary 1.3]{Brian}.
When $r=4$, then
$$\tau(\A)\leq 37-1=36$$
and the equality holds if and only if $\A$ is nearly free with exponents
$(4,4,4)$, see for instance \cite[Corollary 1.4]{Brian}. In fact, in this case we have
$$n_2+3n_3+6n_4=\binom{d}{2}=28 \text{ and } n_2+4n_3+9n_4=\tau(\A).$$
It follows that
$$n_3+3n_4=\tau(\A)-28=36-28=8.$$
Hence in this case we have $n_4\leq 2$ as explained in Remark \ref{rkC} (ii), and also
$$3=\dim D_0(f)_4>2 \geq n_4.$$
When $\tau(\A)=35$, then $\A$ is minimal POG, and hence has exponents $(4,4,5)$, see  \cite[Theorem 1.5]{Brian}.

When $\tau(\A)=34$, either $(n_3,n_4)=(0,2)$ or $(n_3,n_4)=(3,1)$. In the first case
\[
        \dim D_0(f)_4\geq n_4=2,
\]
so $e_1=e_2=4$. The type $1$ classification then gives the POG sequence $(4,4,6)$.

In the second case the weak numerical data do not determine the full exponent sequence. Since $r=4$, the type $1$ and type $2$ classifications, see \cite{Brian,NH}, leave precisely the two possibilities
\[
        (4,4,6)
        \qquad\text{and}\qquad
        (4,5,5,5).
\]
Both possibilities are realized; see Example~\ref{exC34} below.

For \(\tau=33\) and \(\tau=32\), the numerical types are respectively
\[
(n_3,n_4)=(2,1),\qquad (n_3,n_4)=(1,1).
\]
Since \(r=4\), the first exponent is \(4\). For \(\tau=33\), the formulas for curves of type $2$ allow both the type 2A sequence
\[
(4,5,5)
\]
and the type 2B sequence
\[
(4,5,5,6).
\]
Thus the numerical data $(d,m,\tau)=(8,4,33)$ do not determine the full degree sequence, although they determine ${\rm mdr}=4$. For \(\tau=32\), the type $2$ formulas give the sequence \((4,5,6,6)\).

Finally, when $\tau(\A)=31,$ we have  $n_4=1$, $n_3=0$ and $n_2=22$.
In this last case, $\A$ is obtained from an arrangement $\B:f_1=0$ of 7 lines having exactly one  point $p$ of multiplicity 4 besides double points, by adding a new generic line $L$.  It is easy to see that $\B$ is a type 2B arrangement with exponents $(3,5,5,5)$ and $\tau(\B)=24$. It follows that the first exponents of $\A$ are $e_1=4$, $e_2=e_3=e_4=6$ and $e_5 \geq 6$, see \cite[Corollary 4.2]{Der}.
In other words, $\A$ is an arrangement of type 3C
 and the formula in \cite[Theorem 3.1 (4)]{type3} yields
$$\tau(\A)=16+24+36-12-18-6-6-e_5+3=37-e_5=31,$$
and hence $e_5=6$ as well.

\endproof

\begin{ex}
\label{exC34}
Both alternatives in Theorem~\ref{thmC}(4)(b) occur. Consider
\[
F_{\triangle}=xyz(x+y+z)(2x-y)(3x-z)(3y-2z)(12x-3y-2z).
\]
The arrangement $F_{\triangle}=0$ has
\[
        (n_2,n_3,n_4)=(13,3,1),
        \qquad
        \tau=34.
\]
An exact computation gives
\[
(\dim AR(F_{\triangle})_3,\dim AR(F_{\triangle})_4,
 \dim AR(F_{\triangle})_5,\dim AR(F_{\triangle})_6)
=(0,2,6,13),
\]
with
\[
        \rank(\mu_4)=6,
        \qquad
        \rank(\mu_5)=12.
\]
Hence
\[
        \operatorname{Exp}(F_{\triangle})=(4,4,6).
\]

On the other hand, let
\[
F_{\star}=xyz(x+y+z)(2x-y)(3x-z)(5x-y-z)(12x-3y-2z).
\]
Again
\[
        (n_2,n_3,n_4)=(13,3,1),
        \qquad
        \tau=34,
\]
but
\[
(\dim AR(F_{\star})_3,\dim AR(F_{\star})_4,
 \dim AR(F_{\star})_5,\dim AR(F_{\star})_6)
=(0,1,6,13),
\]
and
\[
        \rank(\mu_4)=3,
        \qquad
        \rank(\mu_5)=13.
\]
Therefore
\[
        \operatorname{Exp}(F_{\star})=(4,5,5,5).
\]
Thus the weak combinatorics $(n_2,n_3,n_4)=(13,3,1)$ does not determine the full exponent sequence. Obviously the arrangements do not have isomorphic intersection lattices.
\end{ex}

\begin{ex}
The type 2A alternative in Theorem~\ref{thmC} for $\tau=33$ is realized by
\[
f=z(3x-2z)(x-y-z)(2x-y-2z)(3x-3y-2z)(3x-3y+2z)(3x+2y-3z)(3x+3y-z).
\]
Numbering the factors in the displayed order, the maximal multiple sets are
\[
\{1,3,5,6\},\qquad \{2,3,8\},\qquad \{3,4,7\}.
\]
Hence $(n_2,n_3,n_4)=(16,2,1)$ and $\tau(\A)=33$. An exact computation gives
\[
(\dim AR(f)_3,\dim AR(f)_4,\dim AR(f)_5,\dim AR(f)_6)=(0,1,5,12),
\]
with $\rank(\mu_4)=3$ and $\rank(\mu_5)=12$. Therefore
\[
\operatorname{Exp}(f)=(4,5,5).
\]
\end{ex}

Finally we consider the last remaining case $m=3$.

\begin{thm}
\label{thmD}
Let $\A$ be a line arrangement such that $(d,m)=(8,3)$. Then the following cases are possible.

\begin{enumerate}

\item If $\tau(\A)=36$, then $r=4$, $n_3=8$, and $\A$ is nearly free with exponents $(4,4,4)$.

\item If $\tau(\A)=35$, then $r=4$, $n_3=7$, and $\A$ is minimal POG with exponents $(4,4,5)$.

\item If $\tau(\A)=34$, then $r=4$, $n_3=6$, and the following two cases may occur.
\begin{enumerate}
\item $\A$ is POG with exponents $(4,4,6)$.
\item $\A$ has type 2B with exponents $(4,5,5,5)$.
\end{enumerate}

\item If $\tau(\A)=33$, then $n_3=5$, and the following cases may occur.
\begin{enumerate}
\item $\A$ has type 2A with exponents $(4,5,5)$ and $r=4$.
\item $\A$ has type 2B with exponents $(4,5,5,6)$ and $r=4$.
\item $\A$ has type 3C with exponents $(5,5,5,5,5)$ and $r=5$.
\end{enumerate}

\item If $\tau(\A)=32$, then  $n_3=4$ and  the following cases may occur.

\begin{enumerate}

\item $\A$ has type 2B with exponents $(4,5,6,6)$.

\item $\A$ has type 3B' with exponents $(5,5,5,5)$.

\item $\A$ has type 3C with exponents $(5,5,5,5,6)$.

\end{enumerate}

\item If $\tau(\A)=31$, then $r=5$, $n_3=3$, and  the following cases may occur.

\begin{enumerate}

\item $\A$ has type 3B with exponents $(5,5,5,6)$.

\item $\A$ has type 3C with exponents $(5,5,5,6,6)$.

\end{enumerate}

\item If $\tau(\A)=30$, then $r=5$, $n_3=2$, and $\A$ has type 3C with exponents $(5,5,6,6,6)$.

\item If $\tau(\A)=29$, then $r=5$, $n_3=1$, and $\A$ has type 4 with exponents $(5,6,6,6,6,6)$.

\end{enumerate}
\end{thm}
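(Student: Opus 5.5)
The plan is to follow the scheme of the proofs of Theorems~\ref{thmB} and~\ref{thmC}. First fix the numerics. For $(d,m)=(8,3)$ we have
\[
n_2+3n_3=28,\qquad \tau(\A)=n_2+4n_3=28+n_3,
\]
so $n_3=\tau(\A)-28$. This immediately gives the stated values of $n_3$ in every case.

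Next bound $r$. By \eqref{eqMS} and \eqref{eqR} we get $2\le r\le 5$ with $r\ge m-1=2$. Theorem~\ref{thmCTC} then controls $\tau$ in terms of $r$:
\begin{itemize}
\item[] for $r=3$ it gives $\tau\le 49-12=37$;
\item[] for $r=4$ it gives $\tau\le 49-12-1=36$;
\item[] for $r=5$ it gives $\tau\le 49-10-\binom{4}{2}=33$.
\end{itemize}
To exclude $r\le 3$, I would use the bound $n_3\le 8$ for eight lines with only triple and double points. This holds since the case $n_3\ge 9$ would force $\tau\ge 37$, i.e. freeness with exponents $(3,4)$. I would rule that out combinatorially: a free arrangement with $m=3$ has $r\ge d-m-\ldots$, so I would use the free formula $\tau=(d-1)^2-e_1e_2$ together with the characteristic polynomial $1+\ldots$, which requires $n_3=9$ and $b_2=d-1+n_3\cdot 2+n_2=e_1e_2+(d-1)$. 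The counting check $b_2=\binom{7}{1}\ldots$ gives a contradiction with $3\cdot 4=12$. Once $r\ge 4$ is established, the values of $\tau$ and the du Plessis--Wall bounds determine $r$ in all cases except $\tau\in\{32,33\}$. Where $r=5$ is needed (cases (6)--(8)), Theorem~\ref{thmR}(3) with $k=1$ gives $r=d-3=5$ and $e_1=\cdots=e_{n_3}=5$ directly, since $n_3\le 3$.

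For the full exponent sequences, I would proceed case by case as before.
\begin{itemize}
\item[] $\tau=36$: $r=4$ forces nearly free with exponents $(4,4,4)$ by \cite[Corollary 1.4]{Brian}.
\item[] $\tau=35$: $r=4$ gives minimal POG with exponents $(4,4,5)$ by \cite[Theorem 1.5]{Brian}.
\item[] $\tau=34,33$ with $r=4$: these are type $1$ or $2$ curves, and \cite{Brian,NH} leave exactly the listed sequences.
\item[] $\tau\le 33$ with $r=5$: the type is $e_1+e_2-d+1=3$, or $4$ when $n_3=1$. Here the dimension formula from \cite[Theorem 1.1]{Der},
\[
\dim D_0(f)_k=\tau+\ldots,
\]
computed at $k=5,6$, together with the rank of multiplication by $S_1$ and the bound $e_s\le d-2=6$, fixes the sequences. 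For example, for $n_3=1$ one gets $\dim D_0(f)_6=\tau+3\binom{8}{2}\ldots$, which yields five new generators of degree $6$. The type $3$ formulas from \cite{type3} then give the sequences listed in (6) and (7).
\end{itemize}

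The main obstacle is deciding, for $\tau=33$ ($n_3=5$) and $\tau=32$ ($n_3=4$), exactly when $r=4$ versus $r=5$, and showing that every listed alternative occurs while no other sequence is possible. For this I would imitate Theorem~\ref{thmB}(3), splitting according to whether some line contains no triple point. If so, deleting it gives a $7$-line arrangement with $\tau=\tau(\A)-7$, and Theorem~\ref{thmB} together with Theorem~\ref{thm2} pins down $r$. Otherwise I would use the independence of the $\widetilde D_p$ and their $S_1$-multiples from \cite[Theorem 3.3]{Der} to compute $\dim D_0(f)_4$ and $\rank(\mu_4)$. Finally, I would exhibit explicit arrangements realizing each alternative, by computer check, as in Example~\ref{exC34}.
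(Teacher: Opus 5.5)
\textbf{Main gap: you never establish $r\ge 4$.} Everything after that step depends on it. You write that \eqref{eqMS} and \eqref{eqR} give only $r\ge 2$. In fact \eqref{eqMS} gives $r\ge 2d/m-2=16/3-2=10/3$, so $r\ge 4$ immediately; this is how the paper's proof begins.

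The substitute argument you propose cannot close this gap. Proving $n_3\le 8$ only excludes $\tau=37$, i.e.\ the free case with exponents $(3,4)$. Theorem~\ref{thmCTC} with $r=3$ allows every value $28\le\tau\le 37$. For instance, a nearly free curve with exponents $(3,5,5)$ and $\tau=36$ is not excluded by any Tjurina bound, and it would compete with your case~(1). So no bound on $\tau$ or $n_3$ alone yields $r\ge 4$.

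The ``contradiction'' you extract from Terao's factorization also comes from a wrong formula for $b_2$. The correct value is $b_2=n_2+2n_3=19$ for $(n_2,n_3)=(1,9)$, and then $b_2-(d-1)=12=3\cdot 4$ is perfectly consistent with exponents $(3,4)$. The real reason $n_3\le 8$ is parity: each line meets the other seven lines, so it carries an odd number of double points, whence $n_2\ge 4$.

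\textbf{Other steps that need fixing.}

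Your exponent analysis never treats case~(5a) ($\tau=32$, $r=4$). There $\dim D_0(f)_4=1$ and $\dim D_0(f)_5=n_3=4$, so there is exactly one new generator in degree $5$, and the type~$2$ classification of \cite{NH} gives $(4,5,6,6)$.

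Your claim that $\A$ has type $1$ or $2$ for $\tau=34,33$ with $r=4$ needs justification. Compare $\dim D_0(f)_5=n_3$ with $\dim S_1D_0(f)_4=3$ when $\dim D_0(f)_4=1$. For $\tau=33$, also note that two generators of degree $4$ would make $\A$ POG with $\tau\ge 34$.

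For $r=5$ and $n_3\in\{4,5\}$, the dimension formula and $e_s\le 6$ do not determine $\rank(\mu_5)$. You must invoke the type~$3$ classification \cite[Proposition 6.4]{type3}. For $n_3=5$ you can also note that $\tau=33=\tau(8,5)_{max}'$, so $\A$ is maximal Tjurina.

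Deciding $r=4$ versus $r=5$ for $\tau\in\{32,33\}$ is not required by the theorem, which only lists the possible alternatives; that decision is the content of Remark~\ref{rkD}. Your fallback of using the $\widetilde D_p$ to compute $\dim D_0(f)_4$ is misdirected, since these derivations have degree $d-3=5$.

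On the positive side, your count for $n_3=1$ is correct and simpler than the paper's appeal to \cite{typeN}: $\dim D_0(f)_6=\tau-21=8$, hence $8-3=5$ new generators of degree $6$.
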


\begin{rk}
\label{rkD}
The cases in Theorem~\ref{thmD} where the same weak numerical data allow different values of ${\rm mdr}$ can be distinguished by the intersection lattice.

Consider first case (4), where $n_3=5$. If every line of $\A$ contained at least two triple points, then counting incidences would give
\[
3n_3\geq 2d=16,
\]
a contradiction. Hence there is a line containing either no triple point or exactly one triple point. If there is a line $L_0\in\A$ containing only double points, then \cite[Corollary 1.9(2)]{Der} gives a minimal generator in degree at least $6$. Among the alternatives in (4), this forces case (b), with exponents $(4,5,5,6)$ and $r=4$.

Assume now that no such line $L_0$ exists. Then there is a line $L_1$ containing exactly one triple point. Let $\B_1=\A\setminus L_1$; then $|L_1\cap\B_1|=6$, and $\B_1$ is as in Theorem~\ref{thmB}(3). The two combinatorial alternatives in Theorem~\ref{thmB}(3), together with
the exact sequence of Theorem~\ref{thm2} for $k=4$, show that the first
alternative yields $r=4$, hence case~(4a) or~(4b), whereas the second
yields $r=5$, hence case~(4c). This is sufficient for determining
$\operatorname{mdr}(\mathcal A)$; we do not need to distinguish between
cases~(4a) and~(4b).

In case (5), let $s$ denote the number of lines of $\A$ containing no
triple point. We claim that
\[
        r(\A)=4 \quad\Longleftrightarrow\quad s\geq 2.
\]
Indeed, assume first that $s\geq 2$, and let $L_0$ be one of the lines
containing only double points. Set
\[
        \B=\A\setminus L_0.
\]
Then $\B$ is an arrangement of seven lines with $n_3=4$, and it still
contains a line having no triple point. Hence, by
Theorem~\ref{thmB}(3)(a), one has
\[
        r(\B)=3.
\]
Since $|L_0\cap\B|=7$, Theorem~\ref{thm2}, applied with $k=4$, gives an
injection
\[
        0\longrightarrow D_0(f_{\B})_3
        \longrightarrow D_0(f_{\A})_4.
\]
Thus $D_0(f_{\A})_4\neq 0$, and therefore $r(\A)=4$.

Assume now that $s=1$, and let $L_0$ be the unique line containing no
triple point. Again put $\B=\A\setminus L_0$. Then $\B$ has seven lines
and $n_3=4$, but no line of $\B$ contains only double points. Hence
Theorem~\ref{thmB}(3)(b) gives
\[
        r(\B)=4.
\]
Since $|L_0\cap\B|=7$, Theorem~\ref{thm2}, with $k=4$, yields
\[
0=D_0(f_{\B})_3
\longrightarrow D_0(f_{\A})_4
\longrightarrow
H^0\bigl(L_0,\mathcal O_{L_0}(-2)\bigr)=0.
\]
Consequently $D_0(f_{\A})_4=0$, and hence $r(\A)=5$.

Finally, suppose that $s=0$. Since
\[
        \sum_{L\in\A} n_3(L)=3n_3=12<16=2d,
\]
there is a line $L_1\in\A$ containing exactly one triple point. Let
$\B_1=\A\setminus L_1$. Then $\B_1$ has seven lines and three triple
points, so Theorem~\ref{thmB}(4) gives
\[
        r(\B_1)=4.
\]
Moreover $|L_1\cap\B_1|=6$. Applying Theorem~\ref{thm2} with $k=4$ we
obtain
\[
0=D_0(f_{\B_1})_3
\longrightarrow D_0(f_{\A})_4
\longrightarrow
H^0\bigl(L_1,\mathcal O_{L_1}(-1)\bigr)=0.
\]
Thus again $r(\A)=5$.

It follows that, in case (5), the value of ${\rm mdr}$ is completely
determined by the intersection lattice: it is equal to $4$ precisely
when at least two lines contain only double points, and it is equal to
$5$ otherwise. We do not need here to distinguish between the two
possible degree sequences in the latter case.

In case (6), both possible degree sequences have $r=5$. We make no combinatorial claim distinguishing these two degree sequences; the number of lines containing only double points does not give such a criterion in general.
\end{rk}

\begin{proof}

The inequality \eqref{eqMS} yields in this case $r \geq 4$. Note that in this case we have
$n_2=28-3n_3$, and hence
$$\tau(\A)=28-3n_3+4n_3=28+n_3\geq 29.$$
For $n_3=1$, by Theorem \ref{thmR} we have $r=d-m=5$, hence the exponents are $(5,6, \ldots, 6)$. If 6 is repeated $s\geq 2$ times, then the formula in
\cite[Theorem 3.1]{typeN} yields
$$29=\tau(\A)=91-11t-6t+R,$$
where $t=e_1+e_2-(d-1)=4$ and 
$$2R=t^2-(\epsilon_1^2+ \ldots +\epsilon_{s-1}^2)$$
with $\epsilon_1+ \ldots +\epsilon_{s-1}=t=4$ and $\epsilon_i >0$ some integers. It follows that
$$\epsilon_1^2+ \ldots +\epsilon_{s-1}^2=4,$$
which yields $s=5$ and $\epsilon_1= \ldots =\epsilon_{4}=1$.
Hence the exponents of $\A$ in this case are $(5,6,6,6,6,6)$, in other words $\A$ is a 6-syzygy curve of type $4T_{\pi_8}$ in the notation from
\cite[Corollary 4.1]{typeN}.

For $n_3=2$, by Theorem \ref{thmR}  the exponents are $(5,5,6, \ldots, 6)$, and hence $\A$ is a curve of type 3. Using \cite[Theorem 3.1]{type3}, it follows that $\A$ has type $3C$ with exponents $(5,5,6,6,6)$.

For \(n_3=3\), Theorem~\ref{thmR} gives us the first three exponents 
\[
        e_1=e_2=e_3=d-3=5.
\]
Moreover, since $m=3<d$, the arrangement is essential; hence the bound $e_s\leq d-2$ applies, and every remaining exponent is equal to $6$. Hence the degree sequence has the form
\[
        (5,5,5,6,\ldots,6).
\]
In particular,
\[
        e_1+e_2-d+1=5+5-8+1=3,
\]
so $\mathcal{A}$ is a curve of type $3$.

We now apply the classification of type $3$ curves from
\cite[Theorem~3.1]{type3}. In the present situation $d=8$,
$\tau(A)=31$, and the first three exponents are $(5,5,5)$, the classification
leaves precisely the following two possibilities. Either $\mathcal{A}$ is of type $3B$, and
then the degree sequence is
\[
        (5,5,5,6),
\]
or $\mathcal{A}$ is of type $3C$, and then the degree sequence is
\[
        (5,5,5,6,6).
\]
From now on we assume $n_3 \geq 4$ or, equivalently, $\tau(\A) \geq 32$.

When $r=4$, one has
$$ \tau(\A) \leq 49-12-1=36,$$
and if $\tau(\A)=36$, then $\A$ is maximal Tjurina of type $(8,4)$, that is nearly free with exponents $(4,4,4)$.

When $\tau(\A)=35$ or equivalently $n_3=7$, then $\A$ is a minimal POG, hence has exponents
$(4,4,5)$, see  \cite[Theorem 1.5]{Brian}.

Consider now the cases $r=4$ and $4 \leq n_3 \leq 6$. Recall that
\[
\dim D_0(f)_5=n_3.
\]
This dimension does not by itself determine the number of minimal generators in degree $5$; one has to use the multiplication map
\[
\mu_4:S_1\otimes AR(f)_4\longrightarrow AR(f)_5.
\]

Suppose first that $n_3=6$. If there are two minimal generators in degree $4$, then the type $1$ classification gives the POG sequence $(4,4,6)$. If instead $\dim AR(f)_4=1$, then $\rank(\mu_4)=3$, and hence there are
\[
6-3=3
\]
new minimal generators in degree $5$. The type $2$ classification then gives the sequence $(4,5,5,5)$. Thus both alternatives in point (3) have $r=4$.

Suppose next that $n_3=5$. The cases with two generators in degree $4$ have Tjurina number at least $34$, so here $\dim AR(f)_4=1$ and $\rank(\mu_4)=3$. Hence there are
\[
5-3=2
\]
new generators in degree $5$, and the degree sequence starts with $(4,5,5)$. The classification of type $2$ curves gives the two possibilities
\[
(4,5,5)\qquad\text{and}\qquad(4,5,5,6),
\]
corresponding respectively to types 2A and 2B. These are cases (4a) and (4b).

Finally, if $n_3=4$ and $r=4$, then $\dim AR(f)_5=4$ and $\dim AR(f)_4=1$, so there is one new generator in degree $5$. The type $2$ classification yields the sequence $(4,5,6,6)$, which is case (5a).

When $r=5$, the type $3$ classification, in particular \cite[Proposition 6.4]{type3}, gives the remaining $r=5$ alternatives listed above: for $n_3=5$ one gets $(5,5,5,5,5)$, while for $n_3=4$ one gets $(5,5,5,5)$ or $(5,5,5,5,6)$. In particular, no $r=5$ case occurs for $n_3=6$. The cases $n_3\leq 3$ were treated above.

\end{proof}

\begin{ex}
The additional branches in Theorem~\ref{thmD} occur.
\begin{enumerate}
\item For
\[
f=xyz(x+y+z)(x+2y+3z)(y+z)(x-y)(x-y-3z),
\]
the six triple points are
\[
127,\quad146,\quad236,\quad378,\quad458,\quad567.
\]
Thus $(n_2,n_3)=(10,6)$ and $\tau(\A)=34$. Moreover,
\[
(\dim AR(f)_3,\dim AR(f)_4,\dim AR(f)_5,\dim AR(f)_6)=(0,1,6,13),
\]
with $\rank(\mu_4)=3$ and $\rank(\mu_5)=13$. Hence
\[
\operatorname{Exp}(f)=(4,5,5,5).
\]

\item For
\[
\begin{aligned}
f={}&(x+2y-3z)(2y-z)(3x-3y+z)(x-2z)\\
&\cdot(2x-3y+3z)(2x+3y)(3x-y)(3x-y-2z),
\end{aligned}
\]
the five triple points are
\[
124,\quad157,\quad237,\quad256,\quad345.
\]
Thus $(n_2,n_3)=(13,5)$ and $\tau(\A)=33$. Moreover,
\[
(\dim AR(f)_3,\dim AR(f)_4,\dim AR(f)_5,\dim AR(f)_6)=(0,1,5,12),
\]
with $\rank(\mu_4)=3$ and $\rank(\mu_5)=11$. Hence
\[
\operatorname{Exp}(f)=(4,5,5,6).
\]
\end{enumerate}
\end{ex}

\begin{cor}
\label{corNoClassical}
For line arrangements with $d<9$, the minimal degree ${\rm mdr}$ is determined by the intersection lattice. In particular, there are no classical Ziegler pairs with fewer than nine lines.
\end{cor}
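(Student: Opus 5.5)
The plan is to run through all pairs $(d,m)$ with $d\leq 8$ and check in each case that ${\rm mdr}$ is a function of $L(\A)$. Throughout I will use that $L(\A)$ determines $d$, $m$, every $n_k$, hence $\tau(\A)=\sum n_k(k-1)^2$. It also determines finer incidence data, such as the number of lines through no triple point, or whether some line carries exactly one triple point. Non-essential arrangements are a pencil, recognizable from the lattice, and have ${\rm mdr}=0$. For $d\leq 5$ every essential arrangement has $m=2$ or $2m>d$.

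\textbf{Elementary cases.} If $2m>d$, Case 1 gives $r=d-m$. If $m=2$, Case 2 gives $r=d-2$. This leaves $m\geq 3$ and $d\geq 2m$ with $d\leq 8$, that is, $(d,m)\in\{(6,3),(7,3),(8,3),(8,4)\}$.

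\textbf{The four remaining cases.}
\begin{itemize}
\item $(6,3)$: by Theorem~\ref{thmA}, $r$ is determined by $n_3$ ($r=2$ iff $n_3=4$, otherwise $r=3$).
\item $(7,3)$: by Theorem~\ref{thmB}, $r=3$ for $n_3\in\{5,6\}$ and $r=4$ for $n_3\leq 3$. For $n_3=4$, the value is decided by a lattice condition: $r=3$ if some line contains no triple point, and $r=4$ otherwise. The proof shows these two alternatives are exclusive and exhaustive.
\item $(8,4)$: by Theorem~\ref{thmC}, $r=3$ iff $\tau=37$, and $r=4$ in all other cases. So $r$ is a function of $\tau$, hence of $n_3,n_4$.
\item $(8,3)$: by Theorem~\ref{thmD}, $r$ depends only on $\tau=28+n_3$ except when $n_3\in\{4,5\}$.
\begin{itemize}
\item For $n_3=5$: if some line contains no triple point, then $r=4$. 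Otherwise there is a line $L_1$ with exactly one triple point, and $r$ is read off from which alternative of Theorem~\ref{thmB}(3) holds for $\A\setminus L_1$, a lattice property.
\item For $n_3=4$: $r=4$ iff at least two lines contain no triple point.
\end{itemize}
Both criteria come from Remark~\ref{rkD}.
\end{itemize}

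\textbf{Conclusion.} Two arrangements with isomorphic lattices lie in the same case and satisfy the same combinatorial criterion, so they have equal ${\rm mdr}$. Hence no classical Ziegler pair exists for $d<9$.

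\textbf{Main obstacle.} The delicate step is the $n_3=5$ sub-case of $(8,3)$, because $r$ is decided only through the deletion $\A\setminus L_1$. First, the choice of $L_1$ is not canonical, since several lines may carry exactly one triple point. This is harmless: once we know $r$ is well defined, any choice gives the same answer, so the criterion remains lattice-determined. Second, one must check that the deletion sequence of Theorem~\ref{thm2} with $k=4$ and $|R|=6$ really separates the values. Suppose $\A\setminus L_1$ has $r=3$, as in case (3)(a) of Theorem~\ref{thmB}. Then $D_0(f_1)_3\neq0$ injects into $D_0(f)_4$, giving $r=4$. Suppose instead $\A\setminus L_1$ has $r=4$, as in case (3)(b). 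Then $D_0(f_1)_3=0$ and $H^0(\OO_{L_1}(-1))=0$, so $D_0(f)_4=0$. Combined with the bound $r\geq 4$ from \eqref{eqMS}, this forces $r=5$. I would verify this step carefully; the remaining steps only assemble results already established.
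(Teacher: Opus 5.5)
Your proposal is correct and follows the paper's proof essentially verbatim. You reduce via Cases 1--2 to $(d,m)\in\{(6,3),(7,3),(8,4),(8,3)\}$, then read off ${\rm mdr}$ from Theorems~\ref{thmA}, \ref{thmB}(3), \ref{thmC} and the incidence criteria of Remark~\ref{rkD}.

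One small slip in your re-check of the $n_3=5$ branch: $D_0(f)_4=0$ already gives $r\geq 5$, and what forces $r=5$ is the upper bound $r\leq d-m=5$ from \eqref{eqR}, not the lower bound \eqref{eqMS}. The lower bound \eqref{eqMS} is what you need in the other branch to conclude $r=4$.
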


\begin{proof}
If $m=2$ or $2m>d$, the assertion follows from the formulas in
Section~2. The only remaining cases are
\[
        (d,m)=(6,3),\quad (7,3),\quad (8,4),\quad (8,3).
\]

The case $(6,3)$ is covered by Theorem~\ref{thmA}. For $(8,4)$, the
value of $r={\rm mdr}$ is determined already by the numerical data in
Theorem~\ref{thmC}.

For $(7,3)$, the only weak numerical type for which two values of
${\rm mdr}$ may occur is $n_3=4$. By Theorem~\ref{thmB}(3), the value
is determined by whether the intersection lattice contains a line
with no triple point.

For $(8,3)$, the only weak numerical types for which different values
of ${\rm mdr}$ may occur are those discussed in
Remark~\ref{rkD}. In particular, when $n_3=4$, one has
\[
        r=4
        \quad\Longleftrightarrow\quad
        \text{at least two lines of $\A$ contain no triple point},
\]
while otherwise $r=5$. The corresponding criteria for $n_3=5$ are
also expressed entirely in terms of incidences in the intersection
lattice.

Hence in every case with $d<9$ the value of ${\rm mdr}$ is determined
by the intersection lattice. Therefore there are no classical
Ziegler pairs with fewer than nine lines.
\end{proof}

\section{A two-parameter family of arrangements of \texorpdfstring{$11$}{11} lines}
\label{section5}

We now introduce a two-parameter family of arrangements of eleven lines in \(\mathbb P^2\).  The family is defined by the following \(3\times 11\) realization matrix:
{\small
\[
A(a,b)=
\left(
\begin{array}{ccccccccccc}
1 & 0 & 1 & 1 & b-1 & b-1 & 1 & 1 & 0 & 1 & 0 \\
0 & 1 & 1 & a & ab-1 & ab-1 & a & 1 & 1 & 0 & 0 \\
0 & 0 & 1 & b(1-a) & b(b-1)(1-a) & b-1 & a & 0 & 1-b & b & 1
\end{array}
\right).
\]
}
We regard the columns of \(A(a,b)\) as the linear forms
\[
\ell_1,\ldots,\ell_{11}\in S_1
\]
and set
\[
f_{a,b}=\prod_{i=1}^{11}\ell_i.
\]
Thus \(f_{a,b}=0\) defines an arrangement \(\mathcal A_{a,b}\) of eleven lines.

The following thirteen triple dependencies occur identically in the
family. On the open set defined below, these are exactly the triple
points, the remaining singularities being sixteen double points. The triple points are
{\footnotesize
\[
\begin{aligned}
\mathcal T=\{&
\{\ell_1,\ell_2,\ell_8\},
\{\ell_1,\ell_3,\ell_7\},
\{\ell_1,\ell_{10},\ell_{11}\},
\{\ell_2,\ell_3,\ell_6\},
\{\ell_2,\ell_4,\ell_5\},
\{\ell_2,\ell_9,\ell_{11}\},\\&
\{\ell_3,\ell_8,\ell_{11}\},
\{\ell_3,\ell_9,\ell_{10}\},
\{\ell_4,\ell_7,\ell_{11}\},
\{\ell_4,\ell_8,\ell_{10}\},
\{\ell_5,\ell_6,\ell_{11}\},
\{\ell_5,\ell_8,\ell_9\},
\{\ell_6,\ell_7,\ell_9\}
\}.
\end{aligned}
\]}

The triples listed above correspond to the $3\times 3$ minors of $A(a,b)$ which vanish
identically.  In order to keep the intersection lattice fixed, we must require that no
additional triples occur and that no two columns become proportional.  Equivalently, all
$3\times 3$ minors corresponding to triples not contained in $\mathcal{T}$, together with the
pairwise non-proportionality conditions for the columns, must be nonzero.  After
factorization, the product of the relevant nonzero factors is
\[
\begin{aligned}
\Delta(a,b)=&\,a(a-1)b(b-1)(a-b)(ab-1)\\
&\cdot (ab+a-b)(ab-b+1)\\
&\cdot (ab^2-b^2+b-1)\\
&\cdot (a^2b^2-ab^2-a+b).
\end{aligned}
\]
We work on the open set
\[
        \mathcal{R}=\{(a,b)\in \mathbb A^2\mid \Delta(a,b)\neq 0\}.
\]
For all \((a,b)\in \mathcal{R}\), the arrangements \(\mathcal A_{a,b}\) have \(13\) triple points and \(16\) double points.  Hence
\[
\tau(\mathcal A_{a,b})
=
13(3-1)^2+16(2-1)^2
=
68.
\]

For the rest of this section we write $f=f_{a,b}$. We first consider the concrete point
\[
        q=(a,b)=(31,17).
\]
Since $\Delta(31,17)\neq 0$, this point belongs to $\mathcal R$. We write $\mathcal A_q=\mathcal A_{31,17}$.
A direct linear algebra computation in \texttt{SINGULAR} gives
\[
        AR(f)_d=0\qquad\text{for }d<6,
\]
and
\[
\dim AR(f)_6=1,\qquad
\dim AR(f)_7=5,\qquad
\dim AR(f)_8=13,\qquad
\dim AR(f)_9=23.
\]
Moreover, for the multiplication maps
\[
\mu_k:S_1\otimes AR(f)_k\longrightarrow AR(f)_{k+1}
\]
one obtains
\[
\operatorname{rank}(\mu_6)=3,\qquad
\operatorname{rank}(\mu_7)=12,\qquad
\operatorname{rank}(\mu_8)=23.
\]
It follows that there is one minimal generator in degree $6$, two new
minimal generators in degree $7$, one new minimal generator in degree
$8$, and no new minimal generator in degree $9$. Since $\mathcal A_q$ is essential, the bound $e_j\leq d-2=9$ applies, and this determines the complete degree sequence:
\[
        \operatorname{Exp}(f)=(6,7,7,8).
\]
The corresponding minimal graded free resolution of the Milnor
algebra is
\[
0
\longrightarrow
S(-19)^2
\longrightarrow
S(-16)\oplus S(-17)^2\oplus S(-18)
\longrightarrow
S(-10)^3
\longrightarrow
S
\longrightarrow
M(f)
\longrightarrow
0.
\]

Now consider the special point
\[
        p=\left(\frac{3}{10},\frac{2}{7}\right).
\]
Since
\[
\Delta\left(\frac{3}{10},\frac{2}{7}\right)
=
-\frac{7128}{6565234375}\neq 0,
\]
we have $p\in\mathcal R$, so the intersection lattice does not
degenerate at $p$.

At this point one obtains
\[
\dim AR(f_p)_6=1,\qquad
\dim AR(f_p)_7=5,\qquad
\dim AR(f_p)_8=13,\qquad
\dim AR(f_p)_9=23,
\]
and
\[
\operatorname{rank}(\mu_6)=3,\qquad
\operatorname{rank}(\mu_7)=11,\qquad
\operatorname{rank}(\mu_8)=23.
\]
Therefore there is one minimal generator in degree $6$, two new
minimal generators in degree $7$, two new minimal generators in degree
$8$, and no new generator in degree $9$. Since $\mathcal A_p$ is essential, the bound $e_j\leq d-2=9$ applies, and we obtain the complete degree sequence
\[
        \operatorname{Exp}(f_p)=(6,7,7,8,8).
\]
The corresponding minimal graded free resolution of the Milnor
algebra is
{\footnotesize
\[
0
\longrightarrow
S(-18)\oplus S(-19)^2
\longrightarrow
S(-16)\oplus S(-17)^2\oplus S(-18)^2
\longrightarrow
S(-10)^3
\longrightarrow
S
\longrightarrow
M(f_p)
\longrightarrow
0.
\]}

We justify the shifts in the leftmost free modules of the two resolutions. Since $AR(f)=D_0(f)$ is a reflexive $S$-module of rank $2$, it has projective dimension at most $1$. Hence, if $AR(f)$ has $s$ minimal generators, the leftmost free module in the corresponding resolution of $M(f)$ has rank $s-2$. Write \[ H_{M(f)}(t)=\frac{P(t)}{(1-t)^3}. \] Since the Hilbert function is eventually equal to $\tau(\mathcal A)=68$, one has \[ P(1)=P'(1)=0,\qquad P''(1)=136. \] For $\mathcal A_q$, the middle shifts are $16,17,17,18$. Writing the two leftmost shifts as $b_1,b_2$, the identities above give \[ b_1+b_2=38,\qquad b_1(b_1-1)+b_2(b_2-1)=684, \] hence \[ (b_1,b_2)=(19,19). \] For $\mathcal A_p$, the middle shifts are $16,17,17,18,18$. Writing the three leftmost shifts as $b_1,b_2,b_3$, one obtains \[ b_1+b_2+b_3=56,\qquad \sum_{j=1}^3 b_j(b_j-1)=990, \] whose unique integral solution compatible with the ordering of the shifts is \[ (b_1,b_2,b_3)=(18,19,19). \] Thus the two minimal resolutions displayed above are forced by the computed degree sequences and by $\tau(\mathcal A)=68$.
\noindent
Hence it follows that
\[
(\mathcal A_{31,17},\mathcal A_p)
\]
is a Ziegler pair with the \({\rm MDR}\) property.
\begin{remark}
Although the special arrangement has one additional minimal generator of $AR(f)$ in
degree $8$, the minimal resolution also acquires an additional second syzygy in the
corresponding degree.  Hence the numerator of the Hilbert series remains unchanged:
\[
\begin{aligned}
1-3t^{10}+\big(t^{16}+2t^{17}+2t^{18}\big)
       -\big(t^{18}+2t^{19}\big)
&=1-3t^{10}+t^{16}+2t^{17}+t^{18}-2t^{19}.
\end{aligned}
\]
Therefore both Milnor algebras have the same Hilbert series
\[
        H_{M(f)}(t)=
        \frac{1-3t^{10}+t^{16}+2t^{17}+t^{18}-2t^{19}}{(1-t)^3}.
\]

Its initial values are
{\footnotesize
\[
\begin{array}{c|cccccccccccccccccccc}
k
&0&1&2&3&4&5&6&7&8&9&10&11&12&13&14&15&16&17&18&19\\
\hline
H_{M(f)}(k)
&1&3&6&10&15&21&28&36&45&55&63&69&73&75&75&73&70&68&68&68
\end{array}
\]}
and
\[
H_{M(f)}(k)=68
\qquad
\text{for all }k\geq 17.
\]
Therefore the pair also has the \({\rm HF}\) property.
\end{remark}

We finish the section by briefly indicating how the jumping condition was found.  For a fixed \((a,b)\in \mathcal{R}\), the vector space \(AR(f)_d\) is obtained by solving
\[
\alpha f_x+\beta f_y+\gamma f_z=0,
\qquad
\alpha,\beta,\gamma\in S_d.
\]
Writing
\[
\alpha=\sum_{\lambda}\alpha_\lambda
x^{\lambda_1}y^{\lambda_2}z^{\lambda_3},
\quad
\beta=\sum_{\lambda}\beta_\lambda
x^{\lambda_1}y^{\lambda_2}z^{\lambda_3},
\quad
\gamma=\sum_{\lambda}\gamma_\lambda
x^{\lambda_1}y^{\lambda_2}z^{\lambda_3},
\]
where \(|\lambda|=d\), and comparing coefficients gives a finite-dimensional kernel computation over the ground field.  The map
\[
\mu_7:S_1\otimes AR(f)_7\longrightarrow AR(f)_8
\]
is obtained by multiplying a basis \(h_1,\ldots,h_5\) of \(AR(f)_7\) by \(x,y,z\), and then expressing the resulting syzygies in a basis \(g_1,\ldots,g_{13}\) of \(AR(f)_8\).  The columns of the matrix of \(\mu_7\) are the coordinate vectors of
\[
xh_i,\ yh_i,\ zh_i
\qquad
i=1,\ldots,5.
\]
At the concrete point $q=(31,17)$ this matrix has rank $12$, whereas at
\[
        p=\left(\frac{3}{10},\frac{2}{7}\right)
\]
it has rank $11$. Thus the rank drops by one between these two members, producing the additional minimal generator in degree $8$ at $p$. The \texttt{SINGULAR} script in Appendix A verifies the two displayed ranks. No assertion about the generic rank on the whole parameter space $\mathcal R$ is needed for the construction.
\begin{remark}
For the construction above, it is enough to observe that the rank of
\(\mu_7\) drops at the point
\[
        p=\left(\frac{3}{10},\frac{2}{7}\right).
\]
The question whether this is the only point of the parameter space \(\mathcal{R}\) with this
property is not needed here.
\end{remark}
\section{Deletion technique for constructing new Ziegler pairs of ten lines}
\label{section6}

We now construct a Ziegler pair of ten lines by deleting
\[
\ell_1:x=0
\]
from the arrangement at $q=(31,17)$ and the arrangement at
\[
p=\left(\frac{3}{10},\frac{2}{7}\right).
\]
Let the two eleven-line arrangements be denoted by
\[
\mathcal A_q=\mathcal A_{31,17}
\qquad\text{and}\qquad
\mathcal A_p,
\]
and define
\[
\mathcal B_q
=
\mathcal A_q\setminus\{\ell_1\},
\qquad
\mathcal B_p
=
\mathcal A_p\setminus\{\ell_1\}.
\]
Deleting \(\ell_1\) removes exactly the three triple points
\[
\{\ell_1,\ell_2,\ell_8\},
\qquad
\{\ell_1,\ell_3,\ell_7\},
\qquad
\{\ell_1,\ell_{10},\ell_{11}\}.
\]
The remaining ten triple points are
\[
\begin{aligned}
\mathcal T_{\ell_1}=\{&
\{\ell_2,\ell_3,\ell_6\},
\{\ell_2,\ell_4,\ell_5\},
\{\ell_2,\ell_9,\ell_{11}\},
\{\ell_3,\ell_8,\ell_{11}\},
\{\ell_3,\ell_9,\ell_{10}\},
\{\ell_4,\ell_7,\ell_{11}\},\\&
\{\ell_4,\ell_8,\ell_{10}\},
\{\ell_5,\ell_6,\ell_{11}\},
\{\ell_5,\ell_8,\ell_9\},
\{\ell_6,\ell_7,\ell_9\}
\}.
\end{aligned}
\]
Thus both deleted arrangements have weak combinatorics
\[
(t_2,t_3)=(15,10).
\]
A direct computation in \texttt{SINGULAR} gives, for both deleted arrangements, \[ AR(f)_5=0. \] Since $AR(f)\subset S^3$, this implies $AR(f)_d=0$ for every $d<6$. Moreover, \[ \dim AR(f)_6=3,\qquad \dim AR(f)_7=10,\qquad \dim AR(f)_8=19. \]
For $\mathcal B_q$ one has
\[
        \operatorname{rank}(\mu_6)=9,\qquad
        \operatorname{rank}(\mu_7)=19,
\]
whereas for $\mathcal B_p$ one has
\[
        \operatorname{rank}(\mu_6)=8,\qquad
        \operatorname{rank}(\mu_7)=19.
\]
Hence $\mathcal B_q$ has three minimal generators in
degree $6$, one new minimal generator in degree $7$, and no new
generator in degree $8$. Thus
\[
        \operatorname{Exp}(\mathcal B_q)
        =(6,6,6,7).
\]
For $\mathcal B_p$ there are three minimal generators in degree $6$,
two new minimal generators in degree $7$, and no new generator in
degree $8$, so
\[
        \operatorname{Exp}(\mathcal B_p)
        =(6,6,6,7,7).
\]
Since both deleted arrangements are essential, the bound $e_j\leq d-2=8$ applies, and these computations determine the complete degree sequences. Therefore
\[
        (\mathcal B_q,\mathcal B_p)
\]
is a Ziegler pair of ten lines.

Other deletions lead to further examples.  The weak combinatorics obtained by deleting \(\ell_i\) are summarized in the following table:
\[
\begin{array}{|c|c|c|}
\hline
\text{deleted line} & t_2 & t_3 \\
\hline
\ell_1      & 15 & 10 \\
\ell_2      & 18 & 9  \\
\ell_3      & 18 & 9 \\
\ell_4      & 15 & 10 \\
\ell_5      & 15 & 10 \\
\ell_6      & 15 & 10 \\
\ell_7      & 15 & 10 \\
\ell_8      & 18 & 9  \\
\ell_9      & 18 & 9 \\
\ell_{10}  & 15 & 10 \\
\ell_{11}  & 21 & 8 \\
\hline
\end{array}
\]
For \(i\geq 2\), set
\[
\mathcal B^i_j:=\mathcal A_j\setminus\{\ell_i\},
\qquad
j\in\{q,p\}.
\]
For example,
\[
(\mathcal B^4_{q},\mathcal B^4_p)
\]
is another Ziegler pair of ten lines.  Its intersection lattice differs from that of \((\mathcal B_q,\mathcal B_p)\).  Indeed, for \(\mathcal B_q\) the numbers of triple points lying on the remaining lines are
\[
\begin{array}{c|cccccccccc}
\text{line} & \ell_2 & \ell_3 & \ell_4 & \ell_5 & \ell_6 & \ell_7 & \ell_8 & \ell_9 & \ell_{10} & \ell_{11}\\
\hline
\#\text{ triple points} & 3&3&3&3&3&2&3&4&2&4
\end{array}
\]
with multiset
\[
\{4,4,3,3,3,3,3,3,2,2\}.
\]
For \(\mathcal B^4_{q}\), the corresponding table is
\[
\begin{array}{c|cccccccccc}
\text{line} & \ell_1 & \ell_2 & \ell_3 & \ell_5 & \ell_6 & \ell_7 & \ell_8 & \ell_9 & \ell_{10} & \ell_{11}\\
\hline
\#\text{ triple points} & 3&3&4&2&3&2&3&4&2&4
\end{array}
\]
with multiset
\[
\{4,4,4,3,3,3,3,2,2,2\}.
\]
Since these multisets are invariants of the intersection lattice and
\[
\{4,4,3,3,3,3,3,3,2,2\}
\neq
\{4,4,4,3,3,3,3,2,2,2\},
\]
the two deleted arrangements do not have isomorphic intersection lattices.

\section{Known Ziegler pairs with nine and ten lines}

Let us briefly recall the known examples of Ziegler pairs with nine and ten lines
which are relevant for the present paper. In order to better understand some constructions that we want to present below, let us recall one technical result \cite[Lemma 4.2]{AKP}.

\begin{lemma}
\label{corthm2}
Let $\A_1:f_1=0$ be a line arrangement of $d$ lines with exponents 
$$(e_1, e_2, \ldots, e_s).$$
 Let $L$ be a line avoiding all the multiple points of $\A_1$
and consider the arrangement $\A=\A_1 \cup L$. If $\A_1$ is not a pencil of lines,  then the exponents of $\A:f=0$ are 
$$(e_1+1, e_2+1, \ldots, e_s+1,d-1).$$

\end{lemma}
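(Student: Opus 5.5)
The plan is to use Theorem~\ref{thm2} with $C_1=\A_1$ and $C=\A$. Since $L$ avoids all multiple points of $\A_1$, the set $R=L\cap \A_1$ has exactly $|R|=d$ points. Every singular point of $\A$ is an ordinary multiple point, hence quasihomogeneous. For every $k$ we therefore get the exact sequence
\[
0\to D_0(f_1)_{k-1}\to D_0(f)_k\to H^0(L,\OO_L(k+1-d))\to N(f_1)_{k+d-2}.
\]
The first map is multiplication by $\ell$, the equation of $L$, applied to a derivation of $\A_1$ after correcting it by a multiple of the Euler derivation. In degrees $k\le d-2$ the $H^0$ term vanishes, so $D_0(f)_k\cong \ell\cdot D_0(f_1)_{k-1}$. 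This identification should respect the $S$-module structure, since the first map is $S$-linear up to the degree shift. Consequently, in degrees $\le d-2$ the module $D_0(f)$ agrees with $D_0(f_1)(-1)$, and its minimal generators up to degree $d-2$ are the shifted generators of degrees $e_i+1$. Here we use that $e_s\le d-2$ by \cite{Sch}, because $\A_1$ is not a pencil and hence is essential. So all generators of $D_0(f_1)$ land in degrees $\le d-1$, and I will need to treat degree $d-1$ carefully.

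Next I will look at degree $k=d-1$. There $H^0(L,\OO_L)=\C$, and I expect the map $D_0(f)_{d-1}\to\C$ to be surjective. One route is to show that the next map $H^0(L,\OO_L)\to N(f_1)_{2d-3}$ is zero. Since $2d-3$ exceeds the range where $N(f_1)$ lives, I would invoke the self-duality of $N(f_1)$ with center $(3d-6)/2$ (as in \cite{Port}, already used in the proof of Theorem~\ref{thmA}) to get $N(f_1)_{2d-3}\cong N(f_1)_{d-3}$. That alone is not obviously zero, so I would prefer a direct construction: an explicit derivation $D\in D_0(f)_{d-1}$ of the form $D=f_1\,D_L+(\text{Euler correction})$, where $D_L$ is a constant derivation tangent to the pencil. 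A cleaner choice is the analogue $\tilde D_p$ for a point $p$ on $L$ of multiplicity $2$, built from $f_{2p}$, which has degree $d-1$ for $\A$. By \cite[Theorem 1.3]{Der} it does not vanish at $p$, so it is not in the image of $D_0(f_1)_{d-2}$, since elements of that image vanish along $L$ after restriction. This gives exactly one new element of degree $d-1$ modulo $\ell D_0(f_1)_{d-2}$.

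Then I will show that $D$ is a new minimal generator. It cannot lie in $S_1 D_0(f)_{d-2}=\ell S_1 D_0(f_1)_{d-3}$, because everything there restricts to zero on $L$ in the appropriate sense. Finally, no further generators exist. For $k\ge d$ the dimension count from the exact sequence, or alternatively the bound $e\le (d+1)-2=d-1$ from \cite{Sch} for $\A$, rules out generators in degree $\ge d$. In degree $d-1$ the quotient $D_0(f)_{d-1}/\ell D_0(f_1)_{d-2}$ is at most one-dimensional, so there is exactly one extra generator. The exponents are then $(e_1+1,\dots,e_s+1,d-1)$, reordered if necessary.

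The main obstacle is to guarantee that the generators $\ell\cdot(\text{generators of }D_0(f_1))$ stay minimal in $D_0(f)$. In degree $d-1$ one could worry that a generator of $\A_1$ of degree $d-2$ becomes expressible using $D$. The plan there is to work modulo $\ell$: restricting to $L$, $D$ is nonzero while the others vanish, so any relation expressing $\ell g$ through $D$ would force the coefficient of $D$ to be divisible by $\ell$. Reducing such a relation then contradicts the minimality of the generators in $D_0(f_1)$.
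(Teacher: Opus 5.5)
The paper does not prove this lemma; it quotes it from \cite[Lemma 4.2]{AKP}, and its label marks it as a corollary of Theorem~\ref{thm2}. Your skeleton is the natural route and is sound in most places: Theorem~\ref{thm2} with $|R|=d$, the $S$-linear injection $\phi(D)=\ell D-\frac{D(\ell)}{d+1}E$ giving $D_0(f)_k=\phi(D_0(f_1)_{k-1})$ for $k\le d-2$, the bound $\dim D_0(f)_{d-1}\le \dim D_0(f_1)_{d-2}+1$, $e_s\le d-2$ for $\A_1$, and Schenck's bound $\le d-1$ for $\A$.

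\textbf{The gap.} The one step with real content is showing $D_0(f)_{d-1}\neq \phi(D_0(f_1)_{d-2})$, and your justification for it is false. Elements of the image do not vanish along $L$: one has $\phi(D)|_L=-\frac{D(\ell)}{d+1}E|_L$, and $D(\ell)$ is generally not divisible by $\ell$ since $L\notin\A_1$ (e.g.\ $f_1=xyz$, $D=x\partial_x-y\partial_y$, $\ell=x+y+z$). Moreover, for the node $p\in L$ of $\A$ one computes $\tilde D_p(p)=\frac{2}{d+1}f_{2p}(p)\,p$. This is radial, exactly like $\phi(D)(p)=-\frac{D(\ell)(p)}{d+1}\,p$. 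So ``$\tilde D_p(p)\neq 0$'' is fully compatible with $\tilde D_p\in\phi(D_0(f_1)_{d-2})$ and proves nothing. The same faulty ``restricts to zero on $L$'' test is what you invoke for minimality in your last two paragraphs. Your route via $N(f_1)$ was left unfinished.

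\textbf{Repair via a different test point.} Evaluate modulo the Euler direction at a point $q\in L\setminus R$. Every $\theta\in\phi(D_0(f_1))$ satisfies $\theta(q)\in\C q$. On the other hand,
\[
\tilde D_p(q)=f_{2p}(q)\,p-\frac{D_p(f_{2p})(q)}{d+1}\,q,
\]
with $f_{2p}(q)\neq 0$ and $p\neq q$, so $\tilde D_p(q)\notin\C q$. Hence $D_0(f)_{d-1}=\phi(D_0(f_1)_{d-2})\oplus\C\tilde D_p$. Since $S_1D_0(f)_{d-2}=\phi(S_1D_0(f_1)_{d-3})$, the new generators in degree $d-1$ are the images of the degree $d-2$ generators of $\A_1$ plus exactly one more. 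In a degree $d-1$ relation the coefficient of $\tilde D_p$ is a scalar, and the same test kills it, so no divisibility-by-$\ell$ argument is needed.

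\textbf{Repair via $N(f_1)$.} This route also closes: $N(f_1)_{2d-3}\cong N(f_1)_{d-3}=0$. Indeed, an element of $(I_{f_1})_k$ restricts to each line $L_i\in\A_1$ as a binary form vanishing to order $\ge m_p-1$ at every $p\in L_i$, hence with at least $d-1$ zeros. For $k\le d-2$ it is therefore divisible by all $d$ lines, so it vanishes. This makes $D_0(f)_{d-1}\to H^0(L,\OO_L)$ surjective, and the generator count follows by dimensions alone.
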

Observe that  when $\A_1$ is a pencil of lines, then it has exponents $(0,d-1)$ and
the corresponding arrangement $\A$ has exponents $(1,d-1)$. Therefore this case has to be excluded in Corollary \ref{corthm2}.

Now we can start our presentation of Ziegler pairs. 

The classical example is Ziegler's pair of arrangements of nine lines
\cite{Zi}.  We denote the two arrangements by
\[
        \mathcal{A} : g =xyz(x + y + z)
          (2x + y + z)
          (2x + 3y + z)
          (2x + 3y + 4z)
          (3x + 5z)
          (3x + 4y + 5z)=0
\]
and
\[
        \mathcal{A}' : g'=xyz(x + y + z)
           (2x + y + z)
           (2x + 3y + z)
           (2x + 3y + 4z)
           (x + 3z)
           (x + 2y + 3z)=0.
\]
Both arrangements have the same intersection lattice and weak combinatorics
\[
        (n_2,n_3)=(18,6),
\]
but their minimal degrees of Jacobian relations are different.  With the notation
used here,
\[
        {\rm mdr}(g)=6,
        \qquad
        {\rm mdr}(g')=5.
\]
Geometrically, these arrangements arise from the hexagonal construction: one takes
the six sides of a hexagon and adds the three main diagonals.  The value of
\({\rm mdr}\) depends on whether the six vertices of the hexagon lie on a conic.
This construction was revisited by Dimca and Sticlaru \cite{DimcaSticlaru2024},
where the conic condition is related to Pascal's theorem.

There are several natural ten-line extensions of this pair.

First, one can add a line \(L\) which is generic with respect to both arrangements.
Then
\[
        \mathcal A\cup L
        \qquad\text{and}\qquad
        \mathcal A'\cup L
\]
have the same intersection lattice and weak combinatorics
\[
        (n_2,n_3)=(27,6).
\]
The \({\rm mdr}\)-jump persists, shifted by one:
\[
        {\rm mdr}(gL)=7,
        \qquad
        {\rm mdr}(g'L)=6.
\]

Second, one can add a common line through a common double point.  For the explicit
classical pair above, take
\[
        L_{\rm dbl}:5x+4y=0,
\]
and set
\[
        G_{\rm dbl}=g\cdot (5x+4y),
        \qquad
        G'_{\rm dbl}=g'\cdot (5x+4y).
\]
Then the two ten-line arrangements have weak combinatorics
\[
        (n_2,n_3)=(24,7),
\]
and
\[
        {\rm mdr}(G_{\rm dbl})=7,
        \qquad
        {\rm mdr}(G'_{\rm dbl})=6.
\]

Third, one can add a common line through a common triple point. In the same
coordinates, take
\[
        L_{\rm tr}:3x+y+z=0,
\]
and set
\[
        G_{\rm tr}=g\cdot (3x+y+z),
        \qquad
        G'_{\rm tr}=g' \cdot(3x+y+z).
\]
Then the added line turns one old triple point into a quadruple point.  Thus the
weak combinatorics is
\[
        (n_2,n_3,n_4)=(24,5,1).
\]
In this case the two arrangements have the same minimal degree of a Jacobian
relation:
\[
        {\rm mdr}(G_{\rm tr})=6,
        \qquad
        {\rm mdr}(G'_{\rm tr})=6.
\]
Thus this extension is not distinguished by \({\rm mdr}\) itself, and we have to look at the degree sequences which are $(6,7,7,7,7,7)$ and $(6,6,7,7)$, respectively.

There is also a more special ten-line extension described in
Remark~3.1 of \cite{DimcaSticlaru2024}.  In that construction one starts with
Ziegler's arrangement \(A_Z:f_Z=0\), for which the six vertices of the corresponding
hexagon lie on the degenerate conic
\[
        Q_Z:(x-y-z)(2x-y+z)=0.
\]
One adds the line
\[
        L_Z:x-y-z=0,
\]
which is one of the two components of \(Q_Z\).  If
\[
        B_Z=A_Z\cup L_Z:g_Z=0,
\]
then
\[
        {\rm mdr}(g_Z)=5=\frac{10}{2}.
\]
For the corresponding non-conic realization \(A'_Z:f'_Z=0\), one defines
\[
        B'_Z=A'_Z\cup L_Z:g'_Z=0.
\]
Then \(B_Z\) and \(B'_Z\) have the same intersection lattice, but
\[
        {\rm mdr}(g'_Z)=6.
\]
The weak combinatorics of this pair is
\[
        (n_2,n_3,n_4)=(18,3,3).
\]
Indeed, the added line \(L_Z\) passes through three old triple points, turning them
into quadruple points, while the remaining three triple points remain triple.

In \cite{nine}, we constructed a new Ziegler pair of line arrangements of
degree \(d=9\) with common weak combinatorics
\[
(n_2,n_3,n_4)=(9,7,1).
\]
In particular, this pair is not lattice-equivalent to the classical
Ziegler--Yuzvinsky example.  If \(f\) and \(g\) denote the two defining
equations, then
\[
\operatorname{mdr}(f)=4,
\qquad
\operatorname{mdr}(g)=5.
\]

Starting from this pair, one can construct a Ziegler pair of degree \(10\) by
adding suitable lines.  Namely, we set
\[
f_{\mathrm{dou}}=f\cdot (2x-2y-z),
\qquad
g_{\mathrm{dou}}=g\cdot (2y+z).
\]
In both cases, the added line passes through three old double points and
turns them into triple points.  Hence the two resulting arrangements have the
same weak combinatorics (and isomorphic lattices)
\[
(n_2,n_3,n_4)=(9,10,1).
\]
For these two degree \(10\) arrangements, the minimal degree of a Jacobian
relation no longer distinguishes the two realizations.  Indeed, one has
\[
\operatorname{mdr}(f_{\mathrm{dou}})=5,
\qquad
\operatorname{mdr}(g_{\mathrm{dou}})=5.
\]
However, the two arrangements are still homologically distinct: their degree
sequences are
\[
(5,5,7)
\qquad\text{and}\qquad
(5,6,6,6),
\]
respectively. Moreover, one obtains another Ziegler pair by adding a single suitable general line to both nine-line arrangements. For instance, adding
\[
\ell:\ 7x+11y+13z=0
\]
to each arrangement yields a pair of ten-line arrangements with common weak combinatorics
\[
(n_2,n_3,n_4)=(18,7,1).
\]

Another ten-line example comes from the orchard construction,
equivalently from the elliptic matroid \(T_{10}\), which was studied
in \cite{CP,DP2026}. It is represented by a one-parameter family
\(Q_t=0\). We restrict to those parameter values for which the
intersection lattice is the lattice \(T_{10}\). The two distinguished
realizations considered here are
\[
        L_{10}^{1}:Q_3=0,
        \qquad
        L_{10}^{2}:Q_{\sqrt5+3}=0.
\]
They have the same intersection lattice and weak combinatorics
\[
        (n_2,n_3)=(9,12).
\]
Their modules of Jacobian relations have different minimal resolutions:
\[
        0\to S(-8)
        \to S(-6)^{\oplus 2}\oplus S(-5)
        \to D_0(Q_3)\to 0,
\]
whereas
\[
        0\to S(-8)\oplus S(-7)
        \to S(-7)\oplus S(-6)^{\oplus 2}\oplus S(-5)
        \to D_0(Q_{\sqrt5+3})\to 0.
\]
Thus \(L_{10}^{1}\) and \(L_{10}^{2}\) form a Ziegler pair in the homological
sense.  In the terminology of \cite{NH}, \(L_{10}^{1}\) is of type \(2A\), while
\(L_{10}^{2}\) is of type \(2B\).  In this case
\[
        {\rm mdr}(Q_3)={\rm mdr}(Q_{\sqrt5+3})=5,
\]
so the difference is not detected by \({\rm mdr}\), but by the higher part of the
minimal resolution.

Our example in Section \ref{section6} gives another ten-line Ziegler pair of this homological type.  Deleting
\[
        \ell_1:x=0
\]
from the arrangement $\mathcal A_q=\mathcal A_{31,17}$ and from the arrangement
\(\mathcal A_p\), where
\[
        p=\left(\frac{3}{10},\frac{2}{7}\right),
\]
we obtain arrangements
\[
        \mathcal B_q
        =
        \mathcal A_{31,17}\setminus\{\ell_1\},
        \qquad
        \mathcal B_p
        =
        \mathcal A_p\setminus\{\ell_1\}.
\]
They have the same intersection lattice and weak combinatorics
\[
        (n_2,n_3)=(15,10),
\]
but their degree sequences are
\[
        (6,6,6,7)
        \qquad\text{and}\qquad
        (6,6,6,7,7).
\]
Thus
\[
        {\rm mdr}(\mathcal B_q)
        =
        {\rm mdr}(\mathcal B_p)
        =
        6,
\]
and the difference is visible only in the minimal graded free resolution of the
Milnor algebra.

The final pair that we present here was discovered recently by Gabriel Sticlaru. Let $\mathcal{A}_f$ and $\mathcal{A}_g$ be the line arrangements defined by
\[
\begin{aligned}
f={}&(y-z)(y+2z)(2y+z)(x-2y-z)(x-y-2z)\\
&\times (x-y+z)(x+y-z)(x+y+2z)(2x-y-2z)(x-3y-z)
\end{aligned}
\]
and
\[
g=xz(x-y)(x+y)(x-y-z)(y+z)(x-z)(x+y+2z)
(x-2y-z)(x-3y-z).
\]
Then $(\mathcal{A}_f,\mathcal{A}_g)$ is a new Ziegler pair of $10$ lines. Indeed, the two
arrangements have isomorphic intersection lattices. Both have the same weak
combinatorics
\[
(n_2,n_3,n_4)=(15,6,2),
\]
and hence, in particular, the same global Tjurina number
\[
\tau=15+4\cdot 6+9\cdot 2=57.
\]
Moreover, an explicit relabelling of the ten lines identifies all multiple
points together with their incidences, and therefore
\[
L(\mathcal{A}_f)\cong L(\mathcal{A}_g).
\]
On the other hand, their Jacobian syzygy modules have different graded
structures. For $f$ the corresponding minimal free resolution is
\[
0\longrightarrow S(-17)
\longrightarrow S(-14)\oplus S(-15)^2
\longrightarrow S(-9)^3
\longrightarrow S,
\]
so that
\[
\operatorname{mdr}(f)=5
\]
and the exponents are $(5,6,6)$. In contrast, for $g$ one has
\[
0\longrightarrow S(-16)^3
\longrightarrow S(-15)^5
\longrightarrow S(-9)^3
\longrightarrow S,
\]
hence
\[
\operatorname{mdr}(g)=6,
\]
with exponents $(6,6,6,6,6)$. Thus $\mathcal{A}_f$ and $\mathcal{A}_g$
have the same intersection lattice but different graded Jacobian
resolutions. 

\begin{rk}
\label{rkMT}
It is worth emphasizing that the arrangement $\mathcal{A}_{g}$ is a maximal Tjurina curve with $d=10$ and $r=6$ as defined in \eqref{eqMTC}. Consequently, this example shows that the property of being a maximal Tjurina curve is not determined by the combinatorics of the arrangement. Note that in view of \eqref{eqFC} one may restate Terao's Conjecture by saying that for $2r<d$, where $
r=\operatorname{mdr}
$,
the equality $\tau(\A)=\tau(d,r)_{max}$ depends only on the combinatorics of $\A$.
\end{rk}

Summarizing, the relevant examples in degrees \(9\) and \(10\) are:
{\footnotesize
\[
\begin{array}{c|c|c|c}
d
& \text{arrangements}
& \text{weak combinatorics}
& \text{difference}
\\
\hline
9
& \text{classical Ziegler pair}
& (n_2,n_3) = (18,6)
& {\rm mdr}=5 \text{ versus } 6
\\
\hline
9
& \text{Dimca--Pokora}
& (n_2,n_3,n_4) = (9,7,1)
& {\rm mdr}=4 \text{ versus } 5
\\
\hline
10
& \text{classical Ziegler pair + generic line}
& (n_2,n_3) = (27,6)
& {\rm mdr}=6 \text{ versus } 7
\\
\hline
10
& \text{Sticlaru}
& (n_2,n_3,n_4)=(15,6,2)
& {\rm mdr}=5 \text{ versus } 6
\\
\hline
10
& \text{common double--point extension}
& (n_2,n_3) = (24,7)
& {\rm mdr}=6 \text{ versus } 7
\\
\hline
10
& \text{common triple-point extension}
& (n_2,n_3,n_4) = (24,5,1)
& (6,6,7,7) \text{ versus } (6,7,7,7,7,7)
\\
\hline
10
& \text{Remark~3.1 extension}
& (n_2,n_3,n_4) = (18,3,3)
& {\rm mdr}=5 \text{ versus } 6
\\
\hline
10
& T_{10}\text{-matroid }
& (n_2,n_3) = (9,12)
& \text{type }2A \text{ versus type }2B
\\
\hline
10
& \text{Dimca--Pokora + double points}
& (n_2,n_3,n_4) = (9,10,1)
& (5,5,7) \text{ versus } (5,6,6,6)
\\
\hline
10
& \text{Dimca--Pokora + general line}
& (n_2,n_3,n_4) = (18,7,1)
& {\rm mdr}=5 \text{ versus } 6
\\
\hline
10
& \text{present deletion example}
& (n_2,n_3) = (15,10)
& (6,6,6,7) \text{ versus } (6,6,6,7,7)
\end{array}.
\]}

\section*{Funding}
Alexandru Dimca is partially supported from the project ``Singularities and Applications'' - CF 132/31.07.2023 funded by the European Union - NextGenerationEU - through Romania's National Recovery and Resilience Plan.

Piotr Pokora is supported by the National Science Centre (Poland) Sonata Bis Grant 
\textbf{2023/50/E/ST1/00025.} For the purpose of Open Access, the author has applied a CC-BY public copyright license to any Author Accepted Manuscript (AAM) version arising from this submission.
\section*{Acknowledgements}
We would like to thank Gabriel Sticlaru for sharing with us his newly discovered Ziegler pair of $10$ lines and for allowing us to present it here.

Piotr Pokora would like to thank Bartek Naskręcki for his help in optimizing the symbolic computations used in this work. Part of the computational workflow was developed and refined with the assistance of \texttt{Codex}, OpenAI's coding agent.
%%%%%%%%%%%%%%%%%%%%%%%%%%%%%%%%%%%%%%%%%%%%%%%%%%%%%%%%%%%%%%%%%%%%%%%%%%%%%%%%%%%%%%%%%%%%%%%%%%%%%%%%%%%%%%%%%%%%%%%%%%%%%%%%%%%%%%%%%

\bigskip
Alexandru Dimca\\
\noindent
Universit\'e C\^ ote d'Azur, CNRS, LJAD, France and Simion Stoilow Institute of Mathematics,P.O. Box 1-764, RO-014700 Bucharest, Romania.\\
Email: \texttt{Alexandru.DIMCA@univ-cotedazur.fr}

Piotr Pokora\\
\noindent
Department of Mathematics,
University of the National Education Commission Krakow,
Podchor\c a\.zych 2,
PL-30-084 Krak\'ow, Poland. \\
Email: \texttt{piotr.pokora@uken.krakow.pl}
\newpage
\section*{Appendix A}
\begin{verbatim}
////////////////////////////////////////////////////////////////////////
// Rank test for \mu: S_1 \otimes AR(f)_m ---> AR(f)_{m+1}
// Inspect only the four final variables:
//
// dim_m = dim AR(f)_{m};
// dim_m1 = dim AR(f)_{m+1};
// rank_mu ;
// new_gens;
//
// To certify the complete degree sequences used in Sections 5 and 6,
// rerun the script with the following choices of parameters and m.
//
// FULL ARRANGEMENT, reference point q=(a,b)=(31,17):
// m=6 : (dim_m,dim_m1,rank_mu,new_gens)=(1,5,3,2)
// m=7 : (5,13,12,1)
// m=8 : (13,23,23,0)
//
// FULL ARRANGEMENT, special point (a,b)=(3/10,2/7):
// m=6 : (1,5,3,2)
// m=7 : (5,13,11,2)
// m=8 : (13,23,23,0)
//
// DELETION OF L1, reference point q=(a,b)=(31,17):
// m=5 : (dim_m,dim_m1,rank_mu,new_gens)=(0,3,0,3)
// m=6 : (3,10,9,1)
// m=7 : (10,19,19,0)
//
// DELETION OF L1, special point (a,b)=(3/10,2/7):
// m=5 : (dim_m,dim_m1,rank_mu,new_gens)=(0,3,0,3)
// m=6 : (3,10,8,2)
// m=7 : (10,19,19,0)
////////////////////////////////////////////////////////////////////////

LIB "matrix.lib";

ring R = 0,(x,y,z),dp;

// !Choose parameters!

// Reference point q:
// number a = 31;
// number b = 17;

// Special point:
number a = 3/10;
number b = 2/7;

// Lines.

poly L1  = x;
poly L2  = y;
poly L3  = x + y + z;
poly L4  = x + a*y + b*(1-a)*z;
poly L5  = (b-1)*x + (a*b-1)*y + b*(b-1)*(1-a)*z;
poly L6  = (b-1)*x + (a*b-1)*y + (b-1)*z;
poly L7  = x + a*y + a*z;
poly L8  = x + y;
poly L9  = y + (1-b)*z;
poly L10 = x + b*z;
poly L11 = z;

// !Choose arrangement!

// Full arrangement:
 poly f = L1*L2*L3*L4*L5*L6*L7*L8*L9*L10*L11;
 int m = 7;

// For deletion of L1, use instead:
// poly f = L2*L3*L4*L5*L6*L7*L8*L9*L10*L11;
// int m = 5;  // rerun also with m=6 and m=7

poly fx = diff(f,x);
poly fy = diff(f,y);
poly fz = diff(f,z);

// Coefficient of monomial q in polynomial p.

proc cc(poly p, poly q)
{
    number c = 0;
    while (p != 0)
    {
        if (leadmonom(p) == q)
        {
            c = leadcoef(p);
            return(c);
        }
        p = p - lead(p);
    }
    return(c);
}

// Matrix whose kernel is AR(f)_d.

proc ARmat(int d)
{
    ideal Md = maxideal(d);
    ideal Mt = maxideal(d + deg(f) - 1);

    int nd = size(Md);
    int nt = size(Mt);

    matrix A[nt][3*nd];

    int i,j;

    for (i=1; i<=nd; i++)
    {
        for (j=1; j<=nt; j++)
        {
            A[j,i]        = cc(Md[i]*fx, Mt[j]);
            A[j,nd+i]     = cc(Md[i]*fy, Mt[j]);
            A[j,2*nd+i]   = cc(Md[i]*fz, Mt[j]);
        }
    }

    return(A);
}

// Compute AR(f)_m and AR(f)_{m+1}.

matrix A0 = ARmat(m);
matrix A1 = ARmat(m+1);

module K0 = syz(A0);
module K1 = syz(A1);

int dim_m  = ncols(K0);
int dim_m1 = ncols(K1);

// Build the image of S_1 * AR(f)_m in S_{m+1}^3.

ideal Mm  = maxideal(m);
ideal Mm1 = maxideal(m+1);

int nm  = size(Mm);
int nm1 = size(Mm1);

matrix Images[3*nm1][3*dim_m];

int i,j,k;

for (i=1; i<=dim_m; i++)
{
    poly P = 0;
    poly Q = 0;
    poly H = 0;

    for (j=1; j<=nm; j++)
    {
        P = P + K0[j,i]*Mm[j];
        Q = Q + K0[nm+j,i]*Mm[j];
        H = H + K0[2*nm+j,i]*Mm[j];
    }

    for (k=1; k<=nm1; k++)
    {
        Images[k,3*i-2]       = cc(x*P, Mm1[k]);
        Images[nm1+k,3*i-2]   = cc(x*Q, Mm1[k]);
        Images[2*nm1+k,3*i-2] = cc(x*H, Mm1[k]);

        Images[k,3*i-1]       = cc(y*P, Mm1[k]);
        Images[nm1+k,3*i-1]   = cc(y*Q, Mm1[k]);
        Images[2*nm1+k,3*i-1] = cc(y*H, Mm1[k]);

        Images[k,3*i]         = cc(z*P, Mm1[k]);
        Images[nm1+k,3*i]     = cc(z*Q, Mm1[k]);
        Images[2*nm1+k,3*i]   = cc(z*H, Mm1[k]);
    }
}

int rank_mu = rank(Images);
int new_gens = dim_m1 - rank_mu;

// Check these four values.

dim_m;
dim_m1;
rank_mu;
new_gens;
\end{verbatim}

\end{document}